\documentclass[a4paper,12pt]{article}
\usepackage[top=2cm, bottom=2cm, outer=2cm, inner=2cm, headsep=14pt]{geometry}
\usepackage{amsmath,amsfonts}
\usepackage{enumitem}
\usepackage{adjustbox}
\usepackage{blkarray}


\usepackage{multicol,color} 

\usepackage{tikz}
\usetikzlibrary{positioning, automata}
\usetikzlibrary{decorations.markings}

\usepackage{url} 

\usepackage{soul,xcolor} 

\tikzset{
  on each segment/.style={
    decorate,
    decoration={
      show path construction,
      moveto code={},
      lineto code={
        \path [#1]
        (\tikzinputsegmentfirst) -- (\tikzinputsegmentlast);
      },
      curveto code={
        \path [#1] (\tikzinputsegmentfirst)
        .. controls
        (\tikzinputsegmentsupporta) and (\tikzinputsegmentsupportb)
        ..
        (\tikzinputsegmentlast);
      },
      closepath code={
        \path [#1]
        (\tikzinputsegmentfirst) -- (\tikzinputsegmentlast);
      },
    },
  },
  mid arrow/.style={postaction={decorate,decoration={
        markings,
        mark=at position .5 with {\arrow[#1]{stealth}}
      }}},
}

\usepackage{avant}



\allowdisplaybreaks


\newcommand{\G}{\Gamma}
\def\nnu{{\boldsymbol\nu}}
\newcommand{\Mat}{\mbox{\rm Mat}}

\def\0{{\boldsymbol 0}}

\def\CC{{\mathbb C}}

\def\E{{\cal E}}

\def\U{{\cal U}}

\def\U{{\cal U}}
\def\RR{{\mathbb R}}

\def\NN{{\mathbb N}}
\def\ZZ{{\mathbb Z}}

\def\wh{\widehat}

\def\R{{\mathcal R}}

\def\B{{\mathcal B}}
\def\M{{\mathcal M}}

\def\ol{\overline}
\def\ds{\displaystyle}

\def\Ra{\Rightarrow}
\def\la{\leftarrow}
\def\La{\Leftarrow}
\def\ra{\rightarrow}
\def\XXi{{\mathfrak X}}

\DeclareMathOperator{\Span}{span}
\DeclareMathOperator{\dgr}{deg}
\DeclareMathOperator{\diag}{diag}
\DeclareMathOperator{\trace}{trace}

\DeclareMathOperator{\spec}{spec}

\DeclareMathOperator{\eig}{eig}

\DeclareMathOperator{\jj}{\boldsymbol{j}}

\DeclareMathOperator{\OO}{\boldsymbol{O}}


\newtheorem{theorem}{Theorem}[section]
\newtheorem{lemma}[theorem]{Lemma}
\newtheorem{corollary}[theorem]{Corollary}
\newtheorem{proposition}[theorem]{Proposition}
\newtheorem{definition}[theorem]{Definition}

\newtheorem{problem}{Problem}[section]

\definecolor{ForestGreen}{RGB}{12, 110, 46}
\definecolor{ForestGreenTwo}{RGB}{120, 110, 86}

\newenvironment{proof}{{\noindent\it Proof. }}{\nopagebreak\hspace*{0.5cm}\hfill$\hbox{\rule{3pt}{6pt}}$\smallskip}

\definecolor{ForestGreen}{RGB}{12, 110, 46}
\definecolor{ForestGreenTwo}{RGB}{120, 110, 86}

\newfont\fiverm{cmr5}
\def\eeq{\end{equation}} 
\def\lbeq#1{\begin{equation} \label{#1}} 
\def\gzit#1{{\rm (\ref{#1})}} 

\title{On Hoffman polynomials of $\lambda$-doubly stochastic irreducible matrices and commutative association schemes}

\author{
{Giusy Monzillo}\\
{\small Faculty of Mathematics, Natural Sciences}\\
{\small and Information Technologies}\\
{\small University of Primorska}\\
{\small Muzejski trg 2, 6000 Koper, Slovenia }\\
{\small Giusy.Monzillo@famnit.upr.si} \and
{Safet Penji\'c}\\
{\small Faculty of Mathematics, Natural Sciences}\\
{\small and Information Technologies; and}\\
{\small Andrej Maru\v{s}i\v{c} Institute}\\
{\small University of Primorska}\\
{\small Muzejski trg 2, 6000 Koper, Slovenia }\\
{\small Safet.Penjic@iam.upr.si}
}

\begin{document}
\setstcolor{red}
\maketitle

\begin{abstract}
Let $\G$ denote a finite (strongly) connected regular (di)graph with adjacency matrix $A$. The {\em Hoffman polynomial} $h(t)$ of $\G=\G(A)$ is the unique polynomial of smallest degree satisfying $h(A)=J$, where $J$ denotes the all-ones matrix. Let $X$ denote a nonempty finite set. A nonnegative matrix $B\in\Mat_X(\RR)$ is called {\em $\lambda$-doubly stochastic} if $\sum_{z\in X} (B)_{yz}=\sum_{z\in X} (B)_{zy}=\lambda$ for each $y\in X$. In this paper we first show that there exists a polynomial $h(t)$ such that $h(B)=J$ if and only if $B$ is a $\lambda$-doubly stochastic irreducible matrix. 
This result allows us to define the Hoffman polynomial of a $\lambda$-doubly stochastic irreducible matrix.
 
Now, let $B\in\Mat_X(\RR)$ denote a normal irreducible nonnegative matrix, and $\B=\{p(B)\mid p\in\CC[t]\}$ denote the vector space over $\CC$ of all polynomials in $B$. 
Let us define a $01$-matrix $\wh{A}$ in the following way: $(\wh{A})_{xy}=1$ if and only if $(B)_{xy}>0$ $(x,y\in X)$. Let $\G=\G(\wh{A})$ denote a (di)graph with adjacency matrix $\wh{A}$, diameter $D$, and let $A_D$ denote the distance-$D$ matrix of $\G$. We show that $\B$ is the Bose--Mesner algebra of a commutative $D$-class association scheme if and only if $B$ is a normal $\lambda$-doubly stochastic matrix with $D+1$ distinct eigenvalues and $A_D$ is a polynomial in $B$.
\end{abstract}


\smallskip
{\small
\noindent
{\it{MSC:}} 05E30, 05C75, 05C50, 05C12, 05C20



\smallskip
\noindent 
{\it{Keywords:}} Hoffman polynomial, doubly stochastic matrix, commutative association schemes, Bose-Mesner algebra.
}


\section{Introduction}
\label{1A}


Let $B\in\Mat_X(\RR)$ denote a normal $\lambda$-doubly stochastic irreducible matrix, $\G$ denote the underlying weighted digraph of $B$, and $\B=\{p(B)\mid p\in\CC[t]\}$ denote the vector space over $\CC$ of all polynomials in $B$. In this paper, we study connections between commutative association schemes and $\lambda$-doubly stochastic matrices by considering the following question: under which combinatorial or algebraic restriction on $\G$ is the vector space $\B$ the Bose--Mesner algebra of a commutative association scheme? Formal definitions are given in Section~\ref{2B}.

We first give the relevant background before presenting our main results. Let $X$ denote a finite set, and $\Mat_X(\CC)$ the set of complex matrices with rows and columns indexed by $X$. Let $\R=\{R_0,R_1,\ldots,R_d\}$ denote a set of cardinality $d+1$ of nonempty subsets of $X\times X$. The elements of the set $\R$ are called {\em relations} (or {\em classes}) on $X$. For each integer $i$ $(0\le i\le d)$, let $B_i\in\Mat_X(\CC)$ denote the adjacency matrix of the graph $(X,R_i)$ (directed, in general). The pair ${\XXi}=(X,\R)$ is a {\em commutative $d$-class association scheme} if the {\em relation matrices} $B_i$ satisfy the following properties
\begin{enumerate}[leftmargin=1.7cm,label=\rm(AS\arabic*)]
\item $B_0=I$, the identity matrix.\label{ce}
\item $\ds{\sum_{i=0}^d B_i=J}$, the all-ones matrix.\label{cg}
\item ${B_i}^\top\in\{B_0,B_1,\ldots,B_d\}$ for $0\le i\le d$.
\item $B_iB_j$ is a linear combination of $B_0,B_1,\ldots,B_d$ for $0\le i,j\le d$ (i.e., for every $i,j$ $(0\le i,j\le d)$ there exist positive integers $p^h_{ij}$ $(0\le h\le d)$, known as {\em intersection numbers}, such that $B_iB_j=\sum_{h=0}^d p^h_{ij} B_h$).\label{cb}
\item $B_iB_j=B_jB_i$ for every $i,j$ $(0\le i,j\le d)$ (i.e., for the intersection numbers $p^h_{ij}$, $0\le i,j,h\le d$, from \ref{cb} we have that $p^h_{ij}=p^h_{ji}$).\label{cf}
\end{enumerate}

By \ref{ce}--\ref{cf} the vector space $\M=\Span\{B_0,B_1,\ldots,B_d\}$ is a commutative algebra; it is known as the {\em Bose--Mesner algebra} of $\XXi$. We say that a matrix $B$ {\em generates} $\M$ if every element in $\M$ can be written as a polynomial in $B$. We say that $\XXi$ is {\em symmetric} if the $B_i$'s are symmetric matrices.

A nonnegative matrix $B\in\Mat_X(\RR)$ such that $\sum_{z\in X} (B)_{yz}=\sum_{z\in X} (B)_{zy}=\lambda$ for each $y\in X$ is called a {\em $\lambda$-doubly stochastic matrix}. If $\lambda=1$, the matrix is simply called {\em doubly stochastic}. The following result was proved by {\sc Birkhoff} (1946) and independently by {\sc von Neumann} (1953): Each doubly stochastic matrix $B$ can be represented as a convex combination of permutation matrices, that is, 
\begin{equation}
\label{Qd}
B=c_1 P_1 + c_2 P_2 + \cdots + c_m P_m
\end{equation}
where the $c_i$'s are positive real numbers with $\sum_{i=1}^m c_i = 1$, and $P_1,P_2,\ldots,P_m$ are distinct permutation matrices. It is well known that the convex representation \eqref{Qd} of the doubly stochastic matrix $B$ is unique (up to reordering the terms) if and only if the graph $\G$ is uniquely edge colourable, where $\G$ is a bipartite graph with bipartition $(V_1,V_2)$, where $V_1=\{x_1,x_2,\ldots,x_{|X|}\}$, $V_2=\{y_1,y_2,\ldots,y_{|X|}\}$ and two  vertices $x_i$ and $y_j$ are joined by $\left(\sum_{i=1}^m P_i\right)_{ij}$ edges $(1 \le i,j \le |X|)$  (see, for example, \cite[Subchapter~9.2]{AD}). In \cite{DU}, {\sc Dufoss{\'{e}}} and {\sc U{\c{c}}ar} showed that determining the minimal number of permutation matrices needed in \eqref{Qd} is strongly NP-complete. Some interesting papers that study doubly stochastic matrices are, for example, \cite{BG,BD,BGb,PM,SK}. With respect to representation \eqref{Qd}, from our point of view, it would be interesting to study the combinatorial structure of a (di)graph with adjacency matrix $\sum_{i=1}^m P_i$. In this paper we study representation~\eqref{Qd} in the set-up of Problem~\ref{Aa}.

\begin{problem}{\label{Aa}}{\rm
Let $B\in\Mat_X(\RR)$ denote a non-negative matrix. Assume that the matrix $B$ has exactly $m+1$ distinct entries $\{0,c_1,\ldots,c_m\}$, so that we can write $B$ as a linear combination of $01$-matrices $F_i$ $(1\le i\le m)$ as follows
$$
B=c_1 F_1 + c_2 F_2 + \cdots + c_m F_m
$$
(note that the $c_i$'s are positive real numbers and our $F_i$'s are not necessarily permutation matrices). We also assume that $F_i$'s are $\circ$-idempotents, i.e., $F_i\circ F_j=\OO$ whenever $i\ne j$, where $\circ$ denotes the elementwise-Hadamard product. Let $A=\sum_{i=0}^m F_i$. Can we describe the combinatorial structure (or give some algebraic properties) of the digraph $\G=\G(A)$, so that the vector space $\B=\{p(B)\mid p\in\CC[t]\}$ over $\CC$ of all polynomials in $B$ is the Bose--Mesner algebra of a commutative association scheme? Also, what can we say about the entries of the matrix $B$?
}\end{problem}

Since the all-$1$ matrix $J$ belongs to every commutative association scheme, as a first sub-problem of Problem~\ref{Aa}, we are interested in the case when $J$ is a polynomial in $B$. This property implies that $B$ is a $\lambda$-doubly stochastic matrix (see Theorem~\ref{Qe}), so we obtain an answer to the  second part of the problem. Let us give some background in this direction. For the moment, let $\G$ denote an undirected graph with vertex set $X$, adjacency matrix $A$, and let $J$ denote the all-ones matrix of order $|X|$. In \cite{HoP}, {\sc Hoffman} proved that there exists a polynomial $p(x)$ such that 
\begin{equation}
\label{Qa}
p(A) = J
\end{equation}
if and only if $\G$ is connected and regular. In \cite{HMc}, {\sc Hoffman} and {\sc McAndrew} studied the case of a directed graph, and obtained a similar result: there exists a polynomial $p(x)$ such that \eqref{Qa} holds if and only if $\G$ is strongly connected and regular. Moreover, they showed that the unique polynomial of smallest degree satisfying \eqref{Qa} is $h(t)=\frac{|X|}{q(k)}q(t)$, where $\G=\G(A)$ is a regular digraph of valency $k$, and $(t- k)q(t)$ is the minimal polynomial of $A$. Next, it is well known that a digraph $\G$ is strongly connected if and only if its adjacency matrix $A$ is irreducible (see, for example, \cite[Section~8.3]{MCm}). For the moment, let $C\in\Mat_X(\RR)$ denote a nonnegative matrix. In \cite{WD}, {\sc Wu} and {\sc Deng} study a polynomial that sends a nonnegative irreducible matrix to a positive rank one matrix;  they showed that there is a polynomial $p(t)\in\RR[t]$ such that $p(C)$ is a positive matrix of rank one if and only if $C$ is irreducible. Moreover, they show that the lowest degree of such a polynomial $p(t)$ with $\trace p(C) = |X|$ is unique. The first main result of our paper is Theorem~\ref{Qe}, which is in the same spirit as that of {\sc Hoffman} and {\sc McAndrew} from \cite{HMc} (note that one direction of our theorem also follows from \cite[Theorem~2.2]{WD}).

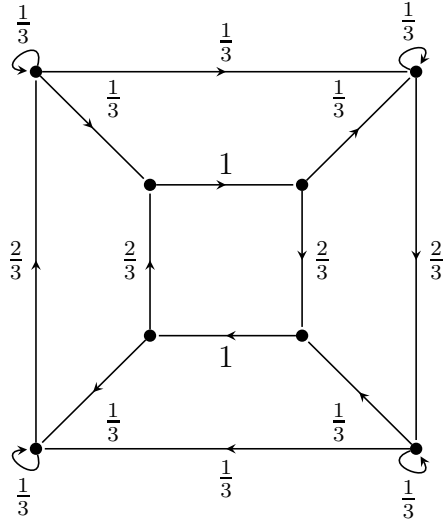
\begin{figure}[t!]
\begin{center}
\begin{tikzpicture}[-,>=stealth,shorten >=1pt,auto,node distance=2.5cm,semithick]
\tikzstyle{every state}=[draw=white, circle, minimum size=4pt, inner sep=0pt]
\node[state] () at (-1,-1) {$B=\left(\begin{array}{cccccccc} \frac{1}{3} & 0 & 0 & \frac{2}{3} & 0 & 0 & 0 & 0\\ \frac{1}{3} & \frac{1}{3} & 0 & 0 & 0 & \frac{1}{3} & 0 & 0\\ 0 & \frac{2}{3} & \frac{1}{3} & 0 & 0 & 0 & 0 & 0\\ 0 & 0 & \frac{1}{3} & \frac{1}{3} & 0 & 0 & 0 & \frac{1}{3}\\ \frac{1}{3} & 0 & 0 & 0 & 0 & 0 & 0 & \frac{2}{3}\\ 0 & 0 & 0 & 0 & 1 & 0 & 0 & 0\\ 0 & 0 & \frac{1}{3} & 0 & 0 & \frac{2}{3} & 0 & 0\\ 0 & 0 & 0 & 0 & 0 & 0 & 1 & 0 \end{array}\right)
$
};
\end{tikzpicture}~~~~~~~~
\begin{tikzpicture}[-,>=stealth,shorten >=1pt,auto,node distance=2.5cm,semithick]
    \tikzstyle{every state}=[draw=black, circle, fill=black, minimum size=4pt, inner sep=0pt]
    \node[state] (A) at (-1,-1) {};
    \node[state] (B) at (1,-1) {};
    \node[state] (C) at (1,1) {};
    \node[state] (D) at (-1,1) {};
    \node[state] (E) at (-2.5,-2.5) {};
    \node[state] (F) at (2.5,-2.5) {};
    \node[state] (G) at (2.5,2.5) {};
    \node[state] (H) at (-2.5,2.5) {};
	
\path (H) edge [out=75, in=175, loop] node [pos=0.5, above] {$\frac{1}{3}$} (H);
\path (G) edge [out=-200, in=-300, loop] node [pos=0.5, above] {$\frac{1}{3}$} (G);
\path (E) edge [out=-75, in=-175, loop] node [pos=0.5, below] {$\frac{1}{3}$} (E);
\path (F) edge [out=200, in=300, loop] node [pos=0.5, below] {$\frac{1}{3}$} (F);

    \path (B) edge [postaction={decorate,decoration={markings,mark=at position 0.5 with {\arrow{>}}}}] node {$1$} (A);
    \path (F) edge [postaction={decorate,decoration={markings,mark=at position 0.5 with {\arrow{>}}}}] node {$\frac{1}{3}$} (B)
	edge [postaction={decorate,decoration={markings,mark=at position 0.5 with {\arrow{>}}}}] node {$\frac{1}{3}$} (E);
    \path (C) edge [postaction={decorate,decoration={markings,mark=at position 0.5 with {\arrow{>}}}}] node {$\frac{1}{3}$} (G);
    \path (G) edge [postaction={decorate,decoration={markings,mark=at position 0.5 with {\arrow{>}}}}] node {$\frac{2}{3}$} (F);
    \path (H) edge [postaction={decorate,decoration={markings,mark=at position 0.5 with {\arrow{>}}}}] node {$\frac{1}{3}$} (D)
	edge [postaction={decorate,decoration={markings,mark=at position 0.5 with {\arrow{>}}}}] node {$\frac{1}{3}$} (G);
	\path (E) edge [postaction={decorate,decoration={markings,mark=at position 0.5 with {\arrow{>}}}}] node {$\frac{2}{3}$} (H);
	\path (A) edge [postaction={decorate,decoration={markings,mark=at position 0.5 with {\arrow{>}}}}] node {$\frac{2}{3}$} (D)
	          edge [postaction={decorate,decoration={markings,mark=at position 0.5 with {\arrow{>}}}}] node {$\frac{1}{3}$} (E);
	\path (D) edge [postaction={decorate,decoration={markings,mark=at position 0.5 with {\arrow{>}}}}] node {$1$} (C);
	\path (C) edge [postaction={decorate,decoration={markings,mark=at position 0.5 with {\arrow{>}}}}] node {$\frac{2}{3}$} (B);
\end{tikzpicture}
\caption{A doubly stochastic matrix $B$ and its underlying weighted digraph. The Hoffman polynomial of $B$ is $h(t)=\frac{8}{q(1)}q(t)$, where $q(t)=t^7 - \frac{1}{3}t^6 + \frac{1}{3}t^5 + \frac{5}{27}t^4 - \frac{8}{27}t^3 + \frac{8}{27}t^2 - \frac{32}{243}t$.}
\label{9r}
\end{center}
\end{figure}

\begin{theorem}
\label{Qe}
For a nonnegative matrix $B\in\Mat_X(\RR)$ there exists a polynomial $p\in\CC[t]$ such that 
\begin{equation}
\label{Qf}
p(B) = J
\end{equation}
if and only if $B$ is a $\lambda$-doubly stochastic irreducible matrix. Moreover, the unique polynomial of smallest degree satisfying \eqref{Qf} is $h(t)=\frac{|X|}{q(\lambda)}q(t)$,  where $q(\lambda)\ne0$ and $(t-\lambda)q(t)$ is the minimal polynomial of $B$. 
\end{theorem}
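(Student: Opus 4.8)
The plan is to prove the two directions of the equivalence separately, and then to extract the minimal-degree statement from the structure uncovered in the ``if'' direction.

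For the ``only if'' direction, suppose $p(B)=J$ for some $p\in\CC[t]$. First I would observe that $B$ commutes with every polynomial in $B$, so $BJ=Bp(B)=p(B)B=JB$. Writing out entries, $BJ$ has every entry in row $y$ equal to the $y$-th row sum of $B$, while $JB$ has every entry in column $z$ equal to the $z$-th column sum; the equality $BJ=JB$ therefore forces all row sums and all column sums to coincide with a single constant $\lambda$, so $B$ is $\lambda$-doubly stochastic. For irreducibility I would expand $p(t)=\sum_k a_k t^k$ and read off, for each ordered pair $(x,y)$ with $x\ne y$, the identity $1=(J)_{xy}=\sum_k a_k (B^k)_{xy}$; since the left side is nonzero and the $k=0$ term vanishes off the diagonal, some $k\ge1$ must have $(B^k)_{xy}>0$ (using $B\ge\OO$), which exhibits a directed walk from $x$ to $y$ in $\G(\wh{A})$. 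As this holds for all such pairs, $\G(\wh{A})$ is strongly connected and $B$ is irreducible.

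For the ``if'' direction the key input is Perron--Frobenius theory. Since $B$ is $\lambda$-doubly stochastic, the all-ones vector $\mathbf 1$ satisfies $B\mathbf 1=\lambda\mathbf 1$ and $\mathbf 1^\top B=\lambda\mathbf 1^\top$; as $B$ is nonnegative and irreducible with the strictly positive eigenvector $\mathbf 1$, Perron--Frobenius identifies $\lambda$ with the spectral radius and guarantees that it is a \emph{simple} eigenvalue. Simplicity means $(t-\lambda)$ divides the minimal polynomial exactly once, so I may write that minimal polynomial as $(t-\lambda)q(t)$ with $q(\lambda)\ne0$. Setting $M=q(B)$ and using $(B-\lambda I)q(B)=0$ together with $q(B)(B-\lambda I)=0$, I would conclude that every column of $M$ lies in the right $\lambda$-eigenspace and every row in the left $\lambda$-eigenspace. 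Both eigenspaces are one-dimensional by simplicity, spanned by $\mathbf 1$ and $\mathbf 1^\top$ respectively, which forces $M=cJ$ for a scalar $c$. Evaluating $\mathbf 1^\top M\mathbf 1$ in two ways (via $M\mathbf 1=q(\lambda)\mathbf 1$ and $J\mathbf 1=|X|\mathbf 1$) pins down $c=q(\lambda)/|X|$, whence $h(B)=\frac{|X|}{q(\lambda)}q(B)=J$.

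Finally, for minimality and uniqueness I would argue inside the ideal generated by the minimal polynomial $\mu=(t-\lambda)q$. Given any $p$ with $p(B)=J$, the difference satisfies $(p-h)(B)=0$, so $\mu\mid(p-h)$ and hence $p=h+\mu g$ for some $g\in\CC[t]$; every nonzero choice of $g$ raises the degree past $\deg\mu>\deg h=\deg q$, so $h$ is the unique representative of smallest degree. I expect the main obstacle to be the ``if'' direction: everything hinges on the simplicity of the Perron eigenvalue $\lambda$ and on recognizing that the doubly stochastic hypothesis supplies $\mathbf 1$ as \emph{both} a left and a right eigenvector, which is exactly what collapses $q(B)$ to a multiple of $J$ rather than to a general rank-one matrix.
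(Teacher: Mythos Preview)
Your proof is correct and follows the same overall strategy as the paper: Perron--Frobenius gives simplicity of $\lambda$, the minimal polynomial factors as $(t-\lambda)q(t)$ with $q(\lambda)\ne0$, and then $q(B)$ is shown to be a scalar multiple of $J$. The ``only if'' direction and the minimality/uniqueness argument match the paper essentially line for line.

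The one place where your packaging differs is in establishing $q(B)=cJ$. The paper argues via the orthogonal decomposition $\CC^{|X|}=\langle\jj\rangle\oplus\langle\jj\rangle^\perp$: from $(B-\lambda I)q(B)=0$ it knows each $q(B)v$ lies in $\Span\{\jj\}$, and then uses $B^\top\jj=\lambda\jj$ together with the Hermitian inner product to show $q(B)v=0$ whenever $v\perp\jj$. You instead read off directly that the columns of $q(B)$ lie in the right $\lambda$-eigenspace and the rows in the left $\lambda$-eigenspace, both one-dimensional and spanned by $\jj$, forcing $q(B)=cJ$ without any inner-product computation. Your route is slightly more concise and purely matrix-theoretic; the paper's route makes the role of orthogonality explicit. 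Both ultimately hinge on the same fact---that $\jj$ is simultaneously the left and right Perron eigenvector with a simple eigenvalue---so the difference is one of presentation rather than substance.
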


We call the polynomial $h(t)$ from the Theorem~\ref{Qe} {\em Hoffman polynomial of} $B$. We use Theorem~\ref{Qe} to prove Theorem~\ref{Qg}, 
giving an algebraic-combinatorial characterization when the Bose--Mesner algebra of a commutative association scheme is generated by a normal $\lambda$-doubly stochastic matrix (for results about when the Bose--Mesner algebra of a commutative association scheme is generated by a (directed) graph, see \cite{FQpG,MPc,TtYl}). For a normal $\lambda$-doubly stochastic irreducible matrix $B$ with $d+1$ distinct eigenvalues $\{\lambda,\lambda_1,\ldots,\lambda_d\}$ the Hoffman polynomial is $h(t)=\frac{|X|}{\pi_0}\prod_{i=1}^d (t-\lambda_i)$, where $\pi_0=\prod_{i=1}^d (\lambda-\lambda_i)$. In Section~\ref{oa}, using the inner product $\langle p,q\rangle=\frac{1}{|X|}\trace(p(B)\ol{q(B)}^\top)$ on the ring $\RR_d[t]$, we define the so-called ``predistance polynomials'' $\{p_i(t)\}_{i=0}^d$, and we show that $\sum_{i=0}^d p_i(A) = J$ (see Lemma~\ref{oi}). The term ``predistance polynomials'' is taken from the theory of distance-regular graphs (see, for example, \cite{FAb,FAc,FAe,FaD,FAf}). For the moment Problem~\ref{Aa} seems to be a hard problem, so we make one restriction on it:  we assume that the number of distinct eigenvalues of $B$ is $D+1$, where $D$ is the diameter of a graph $\G=\G(A)$. The motivation for this restriction (again) arises from the theory of distance-regular graphs (see, for example, \cite{CFGM,FaD,Fsp,SPm,SP}). As a consequence of our restriction, the second main result of this paper is the following theorem:

\begin{theorem}
\label{Qg}
Let $B\in\Mat_X(\RR)$ denote a nonnegative irreducible matrix. Let $\B=\{p(B)\mid p\in\CC[t]\}$ denote the vector space of all polynomials in $B$. Define a $01$-matrix $A$ in the following way: $(A)_{xy}=1$ if and only if $(B_i)_{xy}>0$. Let $\G=\G(A)$ denote a digraph with adjacency matrix $A$,  diameter $D$, and let $A_D$ denote the distance-$D$ matrix of $\G$. Then, $\B$ is the Bose--Mesner algebra of a commutative $D$-class association scheme if and only if $B$ is a normal $\lambda$-doubly stochastic matrix with $D+1$ distinct eigenvalues and $A_D$ is a polynomial in $B$.
\end{theorem}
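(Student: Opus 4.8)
The plan is to prove both implications, using Theorem~\ref{Qe} together with the inner product $\langle p,q\rangle=\frac{1}{|X|}\trace(p(B)\ol{q(B)}^\top)$ and the predistance polynomials $\{p_i\}$ of Section~\ref{oa} (with $p_0(B)=I$, $\deg p_i=i$, and $\sum_{i=0}^D p_i(B)=J$ by Lemma~\ref{oi}) as the main tools.

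For the forward implication, suppose $\B$ is the Bose--Mesner algebra of a commutative $D$-class scheme with relation matrices $B_0=I,B_1,\dots,B_D$. Since $B\in\B$ and $J\in\B$, some polynomial sends $B$ to $J$, so Theorem~\ref{Qe} shows $B$ is $\lambda$-doubly stochastic and irreducible. As the algebra is commutative and, by \ref{cb}--\ref{cf}, closed under transpose, $B^\top\in\B$ commutes with $B$, hence $B$ is normal; being normal with $\dim\B=D+1$, it has exactly $D+1$ distinct eigenvalues. To locate $A_D$ inside $\B$ I would run a \emph{chain-of-supports} argument: for $0\le k\le D$ the matrix $\sum_{l=0}^k B^l$ lies in $\B$ and, since $B\ge 0$ and $\G$ is strongly connected, its support is exactly the set of pairs at distance at most $k$. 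These supports strictly increase with $k$ and reach $J$ at $k=D$; since every element of $\B$ is constant on each relation, each support is a union of relations. A strictly increasing chain of $D+1$ unions of classes, among only $D+1$ classes, must add exactly one class per step, so the distance-$k$ relation is a single relation; thus every $A_k$ is a relation matrix and in particular $A_D\in\B$.

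For the converse, assume $B$ is normal and $\lambda$-doubly stochastic with $D+1$ distinct eigenvalues and $A_D\in\B$. Then $\dim\B=D+1$, the inner product above is positive definite, and because the spectrum is conjugation-invariant the $p_i$ have real coefficients and supp$\,p_j(B)\subseteq\{\text{dist}\le j\}$. Everything reduces to proving $p_i(B)=A_i$ for all $i$; granting this, $\{A_i\}_{i=0}^D$ is a $01$, pairwise Hadamard-orthogonal, independent family of $D+1=\dim\B$ matrices summing to $J$, hence the Hadamard-primitive idempotents of $\B$, so $\B$ is closed under $\circ$ and \ref{ce}--\ref{cf} follow. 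For the base case, since $A_D$ is supported on $R_D$ and each $p_j(B)$ with $j<D$ is supported within distance $j<D$, orthogonal projection gives $A_D=\|p_D(B)\|^{-2}\langle A_D,p_D(B)\rangle\,p_D(B)$; reading $\sum_j p_j(B)=J$ on $R_D$, where only $j=D$ survives, pins the scalar to $1$, so $A_D=p_D(B)$. For the inductive step, assume $p_j(B)=A_j$ for $j\ge i$ and consider $BA_i=Bp_i(B)$. As $B$ is supported within distance $1$, this matrix is supported on $R_{i-1}\cup R_i\cup R_{i+1}$, hence is orthogonal to $p_j(B)$ for $j\le i-2$ (disjoint supports) and for $j\ge i+2$ (degree); a three-term relation $BA_i=\phi_{i-1}p_{i-1}(B)+\phi_iA_i+\phi_{i+1}A_{i+1}$ therefore emerges. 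Since $p_{i-1}(B)$ has degree $i-1$, its support forces the $R_i$- and $R_{i+1}$-parts of the right-hand side to cancel, so $p_{i-1}(B)$ is supported on $R_{i-1}$; reading $\sum_j p_j(B)=J$ on $R_{i-1}$ then gives value $1$ there, whence $p_{i-1}(B)=A_{i-1}$. With all $A_i=p_i(B)\in\B$, axioms \ref{ce},\ref{cg},\ref{cf} are immediate, \ref{cb} holds with nonnegative integer intersection numbers because $A_iA_j\in\B$ is constant on classes, and for the transpose axiom one applies the same result to $B^\top$ to see $A_i^\top=p_i(B^\top)$ is again a Hadamard-primitive idempotent of $\B$, hence equals some $A_j$ by uniqueness of these idempotents.

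The hard part is the downward induction in the general normal, possibly non-symmetric case. Because the spectrum may be complex, multiplication by $t$ is not self-adjoint for the inner product and there is no global three-term recurrence for the $p_i$; the recurrence must instead be produced one level at a time from the support of $BA_i$ together with orthogonality, which is exactly why the hypothesis $A_D\in\B$ (the base case) is indispensable rather than automatic. The subsidiary technical points I expect to need care are the nonvanishing of the leading coefficient $\phi_{i-1}$, so that the induction genuinely descends, and the verification of the transpose axiom through $B^\top$ and the uniqueness of the Hadamard-primitive idempotents.
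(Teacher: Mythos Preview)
Your forward implication is correct and in fact slicker than the paper's version: the paper proves Claims~1 and~2 via a walk-counting argument, whereas your chain-of-supports observation (the supports of $\sum_{l\le k}B^l$ form a strictly increasing chain of $D+1$ unions of the $D+1$ classes, hence add exactly one class per step) gives the same conclusion more transparently.

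The converse, however, has a genuine gap at the inductive step. You assert that $BA_i$ is supported on $R_{i-1}\cup R_i\cup R_{i+1}$ because $B$ is supported within distance $1$. That is false for digraphs. If $(BA_i)_{xy}\ne 0$ then there is $z$ with an arc $x\to z$ and $\partial(z,y)=i$; the triangle inequality gives only $\partial(x,y)\le i+1$, and to get $\partial(x,y)\ge i-1$ you would need $\partial(z,x)\le 1$, which an arc $x\to z$ does not provide. (A directed cycle already breaks it.) Consequently your ``disjoint supports'' reason for $\langle BA_i,p_j(B)\rangle=0$ when $j\le i-2$ fails, and the three-term relation cannot be extracted this way; nor can it be recovered by moving $B$ across the inner product, since the adjoint of multiplication by $B$ is multiplication by $B^\top$, which as a polynomial in $B$ may have degree $D$.

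The paper repairs exactly this point by multiplying on the other side by $B^\top$ rather than by $B$: for $(A_iB^\top)_{xy}=\sum_{z}(A_i)_{xz}(B)_{yz}$, a nonzero entry gives $\partial(x,z)=i$ and an arc $y\to z$, hence $\partial(y,z)\le 1$ and $i=\partial(x,z)\le\partial(x,y)+\partial(y,z)$ forces $\partial(x,y)\ge i-1$ (this is Lemma~\ref{Ld}). The price is that $A_iB^\top$ has high degree in $B$, so there is no degree cutoff at $i+1$; instead the inductive hypothesis $p_h(B)=A_h$ for $h\ge i$ is used to peel off the known $01$-blocks, leaving a polynomial of degree $\le i-1$ whose support is contained in $R_{i-1}$. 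Evaluating $\sum_h p_h(B)=J$ on $R_{i-1}$ (your own trick) then pins $p_{i-1}(B)$ to $1$ there, and the nonvanishing of the coefficient in front of $A_{i-1}$ is checked directly from the expression $\sum_{z\in\Gamma_i^\to(x)}(B)_{yz}$ with $\partial(x,y)=i-1$. If you swap $BA_i$ for $A_iB^\top$ and restructure the step accordingly, your outline goes through.
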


Our Theorem~\ref{Qg} is an analogue of a result from algebraic graph theory, see for example {\cite[Proposition~2]{Fsp} or \cite{FGJ}}, where the authors considered an undirected graph (symmetric adjacency matrix) and proved the following claim: An undirected regular graph $\G=\G(A)$ with diameter $D$ and $d + 1$ distinct eigenvalues is a distance-regular if and only if $D = d$ and the distance-$D$ matrix $A_D$ is a polynomial in $A$.

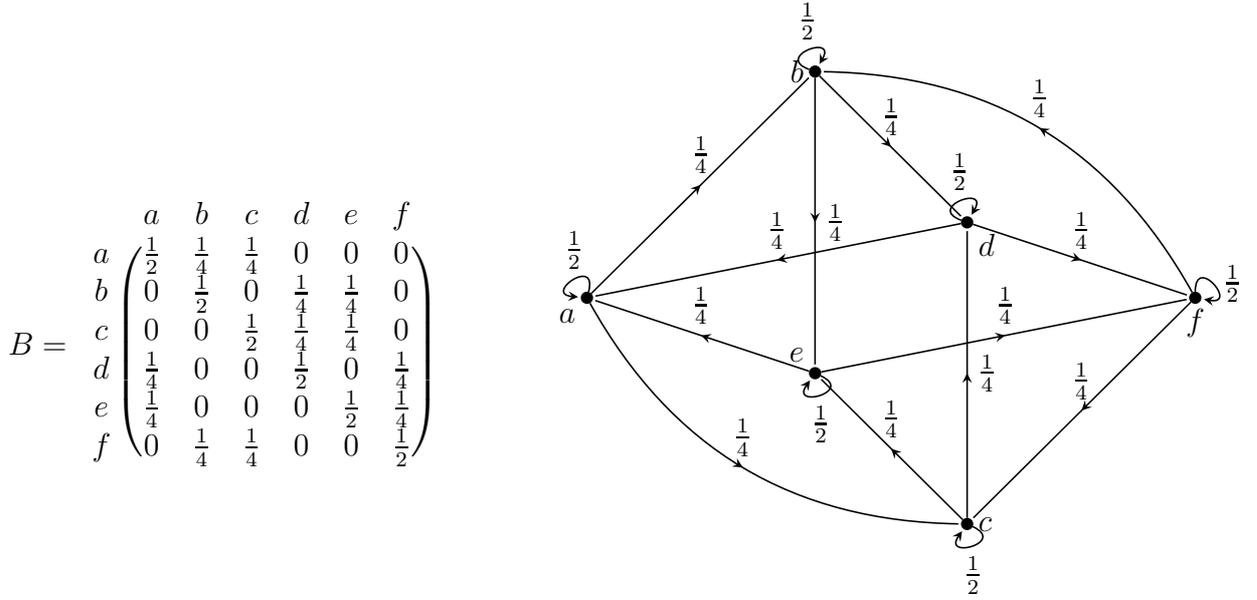
\begin{figure}[t!]
\begin{center}
\begin{tikzpicture}[-,>=stealth,shorten >=1pt,auto,node distance=2.5cm,semithick]
\tikzstyle{every state}=[draw=white, circle, minimum size=4pt, inner sep=0pt]
\node[state] () at (0,0) {$B=\begin{blockarray}{ccccccc}
~ & a & b & c & d & e & f \\
\begin{block}{c(cccccc)}
a & \frac{1}{2} & \frac{1}{4} & \frac{1}{4} & 0 & 0 & 0\\ 
b & 0 & \frac{1}{2} & 0 & \frac{1}{4} & \frac{1}{4} & 0\\ 
c & 0 & 0 & \frac{1}{2} & \frac{1}{4} & \frac{1}{4} & 0\\ 
d & \frac{1}{4} & 0 & 0 & \frac{1}{2} & 0 & \frac{1}{4}\\ 
e & \frac{1}{4} & 0 & 0 & 0 & \frac{1}{2} & \frac{1}{4}\\ 
f & 0 & \frac{1}{4} & \frac{1}{4} & 0 & 0 & \frac{1}{2}\\
\end{block}
\end{blockarray}$};
\end{tikzpicture}~~~~~~~~
\begin{tikzpicture}[-,>=stealth,shorten >=1pt,auto,node distance=2.5cm,semithick]
    \tikzstyle{every state}=[draw=black, circle, fill=black, minimum size=4pt, inner sep=0pt]
\node[state] (a) at (-4,0) {};
\node[state] (b) at (-1,3) {};
\node[state] (c) at (1,-3) {};
\node[state] (d) at (1,1) {};
\node[state] (e) at (-1,-1) {};
\node[state] (f) at (4,0) {};
\node at (-4,0) [anchor=north east] {$a$};
\node at (4,0) [anchor=north] {$f$};
\node at (-1,3) [anchor=east] {$b$};
\node at (-1,-1) [anchor=south east] {$e$};
\node at (1,1) [anchor=north west] {$d$};
\node at (1,-3) [anchor=west] {$c$};

\path (a) edge [out=75, in=175, loop] node [pos=0.5, above] {$\frac{1}{2}$} (a);
\path (b) edge [out=-200, in=-300, loop] node [pos=0.5, above] {$\frac{1}{2}$} (b);
\path (d) edge [out=-200, in=-300, loop] node [pos=0.5, above] {$\frac{1}{2}$} (d);
\path (c) edge [out=-25, in=-125, loop] node [pos=0.5, below] {$\frac{1}{2}$} (c);
\path (e) edge [out=-25, in=-125, loop] node [pos=0.5, below] {$\frac{1}{2}$} (e);
\path (f) edge [out=-270, in=-370, loop] node [pos=0.5, right] {$\frac{1}{2}$} (f);

\path (a) edge [postaction={decorate,decoration={markings,mark=at position 0.5 with {\arrow{>}}}}] node [pos=0.5, above] {$\frac{1}{4}$} (b);
\path (a) edge [bend right, postaction={decorate,decoration={markings,mark=at position 0.5 with {\arrow{>}}}}] node [pos=0.5, above] {$\frac{1}{4}$} (c);
\path (b) edge [postaction={decorate,decoration={markings,mark=at position 0.5 with {\arrow{>}}}}] node [pos=0.5, right] {$\frac{1}{4}$} (e);
\path (b) edge [postaction={decorate,decoration={markings,mark=at position 0.5 with {\arrow{>}}}}] node [pos=0.5, above] {$\frac{1}{4}$} (d);
\path (c) edge [postaction={decorate,decoration={markings,mark=at position 0.5 with {\arrow{>}}}}] node [pos=0.5, above] {$\frac{1}{4}$} (e);
\path (c) edge [postaction={decorate,decoration={markings,mark=at position 0.5 with {\arrow{>}}}}] node [pos=0.5, right] {$\frac{1}{4}$} (d);
\path (d) edge [postaction={decorate,decoration={markings,mark=at position 0.5 with {\arrow{>}}}}] node [pos=0.5, above] {$\frac{1}{4}$} (a);
\path (d) edge [postaction={decorate,decoration={markings,mark=at position 0.5 with {\arrow{>}}}}] node [pos=0.5, above] {$\frac{1}{4}$} (f);
\path (e) edge [postaction={decorate,decoration={markings,mark=at position 0.5 with {\arrow{>}}}}] node [pos=0.5, above] {$\frac{1}{4}$} (a);
\path (e) edge [postaction={decorate,decoration={markings,mark=at position 0.5 with {\arrow{>}}}}] node [pos=0.5, above] {$\frac{1}{4}$} (f);
\path (f) edge [bend right, postaction={decorate,decoration={markings,mark=at position 0.5 with {\arrow{>}}}}] node[pos=0.5, above] {$\frac{1}{4}$} (b);
\path (f) edge [postaction={decorate,decoration={markings,mark=at position 0.5 with {\arrow{>}}}}] node [pos=0.5, above] {$\frac{1}{4}$} (c);

\end{tikzpicture}
\caption{A normal doubly stochastic matrix $B$ and its underlying weighted digraph. The Hoffman polynimal of $B$ is $h(t)=16t^3 - 16t^2 + 8t - 2$, and the predistance polynomials are $p_0(t)=1$, $p_1(t)=4t-2$, $p_2(t)=8t^2 - 8t + 2$ and $p_3(t)=16t^3 - 24t^2 + 12t - 3$. By Lemma~\ref{oi}, $\sum_{i=0}^3 p_i(B)=J$. Moreover, $B$ generates a $3$-class association scheme.}
\label{9s}
\end{center}
\end{figure}

Our paper is organized as follows. In Section~\ref{2B}, we recall basic concepts from algebraic graph theory (experts from the field can skip this section). Our paper then starts from Section~\ref{ra}, where we prove Theorem~\ref{Qe}. In Section~\ref{OA} we define predistance-polynomials, the polynomials that we use later in the paper. In Section~\ref{oa}, we prove Theorem~\ref{Qg}.


\section{Preliminaries}
\label{2B}

A {\em digraph} with {\em vertex set} $X$ and {\em arc set} $\E$ is a pair $\G=(X, \E)$ which consists of a finite set $X=X(\G)$ of {\em vertices} and a set $\E = \E(\G)$ of {\em arcs} ({\em directed edges}) between vertices of $\G$. As the initial and final vertices of an arc are not necessarily different, digraphs may have loops (arcs from a vertex to itself) and multiple arcs, that is, there can be more than one arc from each vertex to any other. If $e = (x,y)\in\E$ is an arc from $x$ to $y$, then the vertex $x$ (and the arc $e$) is {\em adjacent to} the vertex $y$, and the vertex $y$ (and the arc $e$) is {\em adjacent from} $x$. The {\em converse directed graph} $\ol{\G}$ is obtained from $\G$ by reversing the direction of each arc. 
For a vertex $x$, let $\G_1^{\la}(x)$ (resp. $\G_1^{\ra}(x)$) denote the set of vertices adjacent to (resp. from) the vertex $x$. In other words,
$$
\G_1^{\ra}(x) = \{z\mid (x,z)\in \E(\G)\}
\qquad\mbox{ and }\qquad
\G_1^{\la}(x) = \{z\mid (z,x)\in \E(\G)\}.
$$
Two small comments about the above notation: (i) drawing a directed edge from $x$ to $z$, we have $x\ra z$, which yields the idea of using the notation $\G_1^{\ra}(x)$; (ii) drawing a  directed edge from $z$ to $x$, we have $x\la z$ (or $z\ra x$), which yields the idea of using the notation $\G_1^{\la}(x)$. The elements of $\G_1^{\ra}(x)$ are called {\em neighbors} of $x$. Instead of a set of vertices, we can consider a set of arcs: for a vertex $y$, let $\delta_1^{\la}(y)$ (resp. $\delta_1^{\ra}(y)$) denote the set of arcs adjacent to (resp. from) the vertex $y$. The number $|\delta_1^{\ra}(y)|$ is called the {\em out-degree of $y$} and is equal to the number of edges leaving $y$. The number $|\delta_1^{\la}(y)|$ is called the {\em in-degree of $y$} and is equal to the number of edges going to $y$. A digraph $\G$ is {\em $k$-regular} (of valency $k$) if $|\delta^{\ra}_1(y)| = |\delta_1^{\la}(y)| = k$ for all $y\in X$. We call $\G$ {\em simple} if $\G$ contains neither loops nor multiple edges. 

Let $\G = (X,\E)$ denote a digraph. For any two vertices $x, y \in X$, a {\em directed walk} of length $h$ from $x$ to $y$ is a sequence $[x_0,x_1,x_2,\ldots,x_h]$ $(x_i\in X,\, 0\le i\le h)$ such that $x_0 = x$, $x_h = y$, and $x_i$ is adjacent to $x_{i+1}$ (i.e. $x_{i+1}\in\G^{\ra}_1(x_i)$) for $0\le i\le h-1$. We say that $\G$ is {\em strongly connected} if for any $x, y\in X$ there is a directed walk from $x$ to $y$. A {\em closed directed walk} is a directed walk from a vertex to itself. A {\em directed path} is a directed walk such that all vertices of the directed walk are distinct. A {\em cycle} is a closed directed path. The {\em girth} of $\G$ is the length of a shortest cycle in $\G$.

For any $x, y\in X$, the {\em distance} from $x$ to $y$ (or between $x$ and $y$), denoted by $\partial(x, y)$, is the length of a shortest directed path from $x$ to $y$. The {\em diameter} $D = D(\G)$ of a strongly connected digraph $\G$ is defined to be 
$$
D = \max\{\partial(y,z)\,|\,y, z\in X\}.
$$
For a vertex $x\in X$ and any nonnegative integer $i$ not exceeding $D$, let $\G^\ra_i(x)$ (or $\G_i(x)$) denote the subset of vertices in $X$ that are at distance $i$ from $x$, i.e.,
$$
\G^\ra_i(x)=\{z\in X\mid \partial(x,z)=i\}.
$$
We also define the set $\G^\la_i(x)$ as $\G^\la_i(x)=\{z\in X\mid \partial(z,x)=i\}$. Let $\G_{-1}(x) = \G_{D+1}(x) := \emptyset$. The {\it eccentricity} of $x$, denoted by $\varepsilon=\varepsilon(x)$, is the maximum distance between $x$ and any other vertex of $\G$. Note that the diameter of $\G$ equals $\max\{\varepsilon(x)\mid x\in X\}$.

All undirected graphs in this paper can be understood as digraphs in which an undirected edge between two vertices $x$ and $y$ represents two arcs, an arc from $x$ to $y$, and an arc from $y$ to $x$. In diagrams, instead of drawing two arcs, we draw one undirected edge between vertices $x$ and $y$. For a basic introduction to the theory of undirected graphs we refer to \cite[Section~2]{FMPS}. With the word {\em graph} we refer to a finite simple digraph.

\subsection{Doubly stochastic matrix}

Let $X$ denote a nonempty finite set and let $\RR$ (resp. $\CC$) denote the real number field (resp. the complex number field). Let $\Mat_X(\RR)$ (resp. $\Mat_X(\CC)$) denote the $\RR$-algebra (resp. the $\CC$-algebra) consisting of all matrices whose rows and columns are indexed by $X$ and whose entries are in $\RR$ (resp. $\CC$).

A square matrix $B\in\Mat_{X}(\RR)$ is said to be a {\em $\lambda$-doubly stochastic} if $B\ge \OO$ and $B\jj=B^\top\jj=\lambda\jj$, where $\OO$ is the zero square matrix of order $|X|$, $B\ge\OO$ is a shortcut for $(B)_{xy}\ge 0$ (for all $x,y\in X$), $\jj$ is the $|X|$-dimensional column-vector with $1$ in all entries, and $B^\top$ is the transpose of $B$. If $\lambda=1$, the matrix is called {\em doubly stochastic}. A {\em permutation matrix} $P$ is a square matrix with exactly one $1$ in each row and column, and the rest of the entries being zero.

\subsection{Elementary algebraic graph theory}

In this section, we recall some definitions and basic concepts from algebraic graph theory.

The {\em adjacency matrix} $A\in\Mat_X(\CC)$ of a digraph $\G$ (with vertex set $X$) is indexed by the vertices from $X$, and defined in the following way
\begin{equation}
\label{gs}
\mbox{$(A)_{yz} =$ the number of arcs from $y$ to $z$}\qquad (y,z\in X)
\end{equation}
(note that $(A)_{yz}\in\ZZ^+_0$). The distance-$i$ matrix $A_i$ $(2\le i\le D)$ of a digraph $\G$ with diameter $D$ and vertex set $X$ is defined by
$$
(A_i)_{zy}=\left\{\begin{matrix}
1&\mbox{ if $\partial(z,y)=i$},\\
0&\mbox{ otherwise.~~~~}
\end{matrix}\right. \qquad(z,y\in X,~2\le i\le D).
$$
We also define $A_0=I$ and $A_1=A$. A matrix $B\in\Mat_X(\CC)$ is said to be {\em reducible} when there exists a permutation matrix $P$ such that $P^\top B P=\left(
\begin{matrix}
X&Y\\ \OO&Z
\end{matrix}
\right)$, where $X$ and $Z$ are both square, and $\OO$ is a zero matrix of suitable size. Otherwise, $B$ is said to be {\em irreducible}.

\begin{theorem}[Perron--Frobenius Theorem]
\label{gC}
Let $B$ denote an irreducible nonnegative matrix, and let $\eig(B)$ denote the set of
distinct eigenvalues of $B$. If $\theta=\max\limits_{\lambda\in\eig(B)}|\lambda|$, then the following hold.
\begin{enumerate}[label=\rm(\roman*)]
\item $\theta\in\eig(B)$ and $\theta>0$.
\item The algebraic multiplicity of $\theta$ is equal to $1$.
\item There exists an eigenvector $\nnu$ with all positive entries, such that $B\nnu = \theta \nnu$.
\end{enumerate}
Sometimes it is useful to normalize a vector $\nnu$ from {\rm (iii)} in such a way that the smallest entry is equal to $1$. Such a vector $\nnu$ is called a Perron--Frobenius eigenvector.
\end{theorem}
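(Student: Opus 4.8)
The plan is to establish all three parts from the Collatz--Wielandt variational description of $\theta$, after extracting the one combinatorial consequence of irreducibility that powers every step. Write $n=|X|$, and note that $B$ is irreducible exactly when the digraph $\G(B)$ (with an arc $y\to z$ whenever $(B)_{yz}>0$) is strongly connected. First I would prove the positivity lemma $(I+B)^{n-1}>\OO$ (entrywise): since $(I+B)^{n-1}=\sum_{k=0}^{n-1}\binom{n-1}{k}B^k$ and the $(y,z)$-entry of $B^k$ is positive precisely when a directed walk of length $k$ joins $y$ to $z$, strong connectivity guarantees for each pair $(y,z)$ such a walk of some length at most $n-1$ (the $k=0$ term handling $y=z$), so every entry of the sum is positive. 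This lemma is the sole entry point of irreducibility and is exactly what upgrades ``nonnegative and nonzero'' to ``strictly positive'' below.

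For a nonzero $x\ge\0$ set
$$
r(x)=\min_{y:\,x_y>0}\frac{(Bx)_y}{x_y}=\max\{s\ge 0\mid sx\le Bx\},
$$
which is scale-invariant, and put $\theta'=\sup\{r(x)\mid x\ge\0,\ x\ne\0\}$. I would show the supremum is attained at a strictly positive vector and equals $\theta$. Because $r$ is discontinuous on the boundary of the simplex $\{x\ge\0\mid \jj^\top x=1\}$, I pass to the compact set $K=\{(I+B)^{n-1}x\mid x\ge\0,\ \jj^\top x=1\}$ of strictly positive vectors, on which $r$ is continuous; since $B$ commutes with $(I+B)^{n-1}$ and both are nonnegative, $r((I+B)^{n-1}x)\ge r(x)$, so $\sup_K r=\theta'$ is achieved, say at $z$ with $z>\0$. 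If $Bz\ne\theta'z$, then $Bz-\theta'z\ge\0$ is nonzero, hence $(I+B)^{n-1}(Bz-\theta'z)>\0$ strictly; with $w=(I+B)^{n-1}z>\0$ this reads $Bw>\theta'w$ entrywise, giving $r(w)>\theta'$, a contradiction. Thus $Bz=\theta'z$ with $z>\0$. Finally, for any eigenvalue $\lambda$ with eigenvector $u$, taking entrywise absolute values in $Bu=\lambda u$ yields $B|u|\ge|\lambda|\,|u|$, so $|\lambda|\le r(|u|)\le\theta'$; as $\theta'$ is itself a real eigenvalue, $\theta'=\max_{\lambda\in\eig(B)}|\lambda|=\theta$. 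This gives (iii) and $\theta\in\eig(B)$; moreover each row of an irreducible $B$ is nonzero (for $n\ge 2$), so $\theta z=Bz>\0$ with $z>\0$ forces $\theta>0$, completing (i) (the degenerate $1\times1$ zero matrix being excluded as trivial).

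For (ii) I would first prove geometric multiplicity one. If $By=\theta y$ for a real $y$ not a multiple of $z$, pick the largest $c$ with $z-cy\ge\0$; then $z-cy$ is a nonzero nonnegative $\theta$-eigenvector with a zero entry, yet $(I+B)^{n-1}(z-cy)=(1+\theta)^{n-1}(z-cy)$ would have to be strictly positive by the positivity lemma, a contradiction. Decomposing a complex eigenvector into real and imaginary parts reduces to this case, so the $\theta$-eigenspace is spanned by $z$. To pass from geometric to algebraic multiplicity, I apply the whole construction to $B^\top$ (again irreducible, nonnegative, with spectral radius $\theta$) to get a strictly positive left eigenvector $u>\0$, $u^\top B=\theta u^\top$. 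If $\theta$ were a multiple root, there would exist a rank-two generalized eigenvector $w$ with $(B-\theta I)w=z$; left-multiplying by $u^\top$ gives $0=u^\top(B-\theta I)w=u^\top z$, impossible since $u>\0$ and $z>\0$. Hence the single Jordan block at $\theta$ has size one, i.e.\ the algebraic multiplicity is $1$.

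The routine ingredients are the positivity lemma and the absolute-value comparison that identifies $\theta'$ with the spectral radius. The two steps demanding real care are the attainment of the supremum --- secured by the $(I+B)^{n-1}$-regularization that restores continuity of $r$ --- and, above all, the upgrade from geometric to algebraic simplicity in (ii), which I expect to be the main obstacle: ruling out a size-two Jordan block is not a positivity statement about a single eigenvector but requires pairing the right Perron vector $z$ of $B$ against the left Perron vector $u$ of $B^\top$ and exploiting that both are strictly positive.
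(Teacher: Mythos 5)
The paper does not actually prove this statement: it is quoted as a classical result, and the ``proof'' is only the pointer to \cite[Section~8.3]{MCm}. Your argument is correct and complete, and it is in essence the proof that the cited section gives --- Wielandt's Collatz--Wielandt argument: the lemma $(I+B)^{n-1}>\OO$ as the sole use of irreducibility, regularization by $(I+B)^{n-1}$ to restore continuity of $r$ and attain the supremum at a strictly positive $z$, the triangle-inequality comparison $B|u|\ge|\lambda|\,|u|$ identifying the extremal value with the spectral radius, the ``subtract the largest multiple'' argument for geometric simplicity, and the pairing of the right Perron vector of $B$ with the left Perron vector of $B^\top$ to rule out a size-two Jordan block. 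All steps check, including the two delicate ones you flagged; the only points worth stating explicitly are the ones you already handled --- excluding the $1\times 1$ zero matrix from (i), and the rescaling of the generalized eigenvector so that $(B-\theta I)w=z$, which is legitimate precisely because you first proved the $\theta$-eigenspace is spanned by $z$.
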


\begin{proof}
See, for example, \cite[Section~8.3]{MCm}.
\end{proof}

\begin{lemma}[{see, for example, \cite[Section~8.3]{MCm}}]
\label{gb}
A digraph $\G$ with adjacency matrix $A$ is strongly connected if and only if $A$ is an irreducible matrix.
\end{lemma}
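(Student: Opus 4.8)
The plan is to prove both directions by translating each of the two notions into one and the same combinatorial condition on the vertex set $X$: the existence of a partition $X=S\cup T$ into two disjoint nonempty parts across which no arc of $\G$ runs in one prescribed direction. The conceptual link between directed walks and the matrix is the standard fact that, for every integer $h\ge 0$, the entry $(A^h)_{yz}$ equals the number of directed walks of length $h$ from $y$ to $z$ (immediate by induction on $h$ from \eqref{gs}), so that $y$ reaches $z$ by a directed walk exactly when $(A^h)_{yz}>0$ for some $h\ge 0$. I will, however, run the argument at the purely combinatorial level of arcs and reachability, which sits most cleanly against the block definition of reducibility.

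For the implication ``$A$ reducible $\Ra$ $\G$ is not strongly connected'', I would start from a permutation matrix $P$ putting $P^\top A P$ into the block upper-triangular shape of the definition, with the zero block in the lower-left corner. Reading $P$ as a relabelling of the vertices, this decomposition is exactly a partition $X=S\cup T$, where $S$ indexes the first block of rows and columns and $T$ the second, for which $(A)_{ts}=0$ for all $t\in T$ and $s\in S$; that is, $\G$ has no arc from any vertex of $T$ to any vertex of $S$. Any directed walk beginning at a vertex of $T$ must then stay inside $T$ at every step, since each arc leaving a $T$-vertex can only reach $T$. Hence no vertex of $T$ reaches any vertex of $S$, and as both parts are nonempty, $\G$ is not strongly connected.

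For the converse, ``$\G$ not strongly connected $\Ra$ $A$ reducible'', I would fix vertices $x,y\in X$ admitting no directed walk from $x$ to $y$, and set $S=\{z\in X\mid \text{there is a directed walk from } x \text{ to } z\}$ together with $T=X\setminus S$. The length-$0$ walk gives $x\in S$, while $y\in T$, so both parts are nonempty. There can be no arc from $S$ to $T$, for an arc $s\ra t$ with $s\in S$ would let $x$ reach $t$ and force $t\in S$. Ordering the vertices with $T$ first and $S$ second, the conjugate $P^\top A P$ has its $(S\text{-rows},\,T\text{-columns})$ block equal to $\OO$, which is precisely the lower-left block; thus $P^\top A P$ is block upper-triangular with two square diagonal blocks, and $A$ is reducible.

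The only point that needs care is the bookkeeping of the orientation of the zero block: since the same permutation acts simultaneously on rows and columns, one must choose which part is listed first so that the absent arcs (from $T$ to $S$, respectively from $S$ to $T$) land exactly in the lower-left corner required by the definition. Once this choice is made consistently within each direction, both implications are immediate, and I expect no genuine obstacle here — in particular, nothing spectral (and nothing from the Perron--Frobenius Theorem~\ref{gC}) is needed for this equivalence.
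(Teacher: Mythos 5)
Your proof is correct. Note that the paper itself gives no argument for Lemma~\ref{gb} at all --- it is stated with only the citation to \cite[Section~8.3]{MCm} --- so there is nothing internal to compare against; your reachability-partition argument is precisely the standard proof underlying that reference. In particular, you handle the one delicate point correctly in both directions (with $P^\top A P$ having its zero block in the lower-left corner, reducibility encodes ``no arcs from the second group of vertices to the first,'' so in the converse you must list $T=X\setminus S$ before the reachable set $S$), both parts are verified nonempty ($x\in S$ via the length-$0$ walk, $y\in T$), and your closing remark is also accurate: the equivalence is purely combinatorial and needs no spectral input such as Theorem~\ref{gC}.
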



\begin{corollary}
\label{gc}
Let $\G=\G(A)$ denote a simple strongly connected digraph, and let $\eig(\G)$ denote the set of distinct eigenvalues of $\G$. If $\theta=\max_{\lambda\in\spec(\G)}|\lambda|$, then the following hold.
\begin{enumerate}[label=\rm(\roman*)]
\item $\theta\in\eig(\G)$ and $\theta>0$.
\item The algebraic multiplicity of $\theta$ is equal to $1$.
\item There exists an eigenvector $\nnu$ with all positive entries, such that $A\nnu = \theta \nnu$.
\end{enumerate}
\end{corollary}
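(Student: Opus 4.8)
The plan is to derive the corollary directly from the Perron--Frobenius Theorem (Theorem~\ref{gC}) applied to the adjacency matrix $A$ of $\G$, so the whole task reduces to checking that $A$ satisfies the two standing hypotheses of that theorem, namely that $A$ is nonnegative and irreducible. First I would establish nonnegativity: since $\G$ is a finite simple digraph, the entry $(A)_{yz}$ defined in \eqref{gs} counts the arcs from $y$ to $z$, and simplicity forces this count to lie in $\{0,1\}$. Hence $A$ is a $01$-matrix, and in particular $A\ge\OO$.

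Next I would establish irreducibility. By hypothesis $\G$ is strongly connected, so Lemma~\ref{gb} applies and tells us precisely that the adjacency matrix $A$ is irreducible. Combining the two observations, $A$ is an irreducible nonnegative matrix, which is exactly the setting of Theorem~\ref{gC} with $B=A$.

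It then remains to match notation. The quantity $\theta=\max_{\lambda\in\spec(\G)}|\lambda|$ used in the statement is the spectral radius of $A$; since $\spec(\G)$ and $\eig(\G)$ both denote the set of distinct eigenvalues of the adjacency matrix $A$, this $\theta$ coincides with the $\theta=\max_{\lambda\in\eig(B)}|\lambda|$ featured in Theorem~\ref{gC}. With this identification, items (i), (ii), and (iii) of the corollary are word-for-word the three conclusions of Theorem~\ref{gC} specialized to $B=A$, and the proof is complete.

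There is no genuine obstacle here: the entire mathematical content sits in Theorem~\ref{gC} and Lemma~\ref{gb}, and the corollary is a direct specialization to graph adjacency matrices. The only points demanding any care are purely bookkeeping ones, namely invoking simplicity to guarantee $A\ge\OO$ (so that the nonnegativity hypothesis is met) and reconciling the notations $\spec(\G)$, $\eig(\G)$, and $\eig(B)$ for the set of distinct eigenvalues so that the spectral radius $\theta$ in the two statements is literally the same number.
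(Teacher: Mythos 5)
Your proof is correct and follows exactly the route the paper intends: the paper's own proof is the one-line remark ``Routine using Lemma~\ref{gb}'' together with a reference to the Perron--Frobenius theory, which is precisely your argument of verifying $A\ge\OO$ via simplicity, obtaining irreducibility from Lemma~\ref{gb}, and specializing Theorem~\ref{gC} to $B=A$. Your explicit bookkeeping (the $01$-entries of $A$ and the identification of $\spec(\G)$, $\eig(\G)$, and $\eig(B)$) merely spells out what the paper leaves as routine.
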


\begin{proof}
Routine using Lemma~\ref{gb}. (See, for example, \cite[Section~8.3]{MCm}.)
\end{proof}

\smallskip
A matrix $A\in\Mat_X(\CC)$ is called {\em normal} if it commutes with its adjoint, i.e. if $A\ol{A}^\top=\ol{A}^\top A$.

\begin{theorem}[{see, for example, \cite[Chapter~7]{AS}}]
\label{gd}
Let $A\in\Mat_X(\CC)$ denote a matrix over $\CC$, with rows and columns indexed by $X$. Then, the following are equivalent.
\begin{enumerate}[label=\rm(\roman*)]
\item $A$ is normal.
\item $\CC^{|X|}$ has an orthonormal basis consisting of eigenvectors of $A$.
\item $A$ is a diagonalizable matrix.
\item The algebraic multiplicity of $\lambda$ is equal to the geometric multiplicity of $\lambda$, for every eigenvalue $\lambda$ of $A$.
\end{enumerate}
\end{theorem}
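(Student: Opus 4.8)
The plan is to build everything on Schur's unitary triangularization theorem, with the spectral theorem for normal matrices as the core, and to handle (iii)$\Leftrightarrow$(iv) as a separate, inner-product-free step. Throughout write $A^\ast:=\ol{A}^\top$ for the adjoint, so that normality reads $AA^\ast=A^\ast A$. I would establish the two equivalences (i)$\Leftrightarrow$(ii) and (iii)$\Leftrightarrow$(iv), link them by the trivial implication (ii)$\Rightarrow$(iii), and then examine carefully what is needed to travel in the reverse direction.

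For (ii)$\Rightarrow$(i): if $A=UDU^\ast$ with $U$ unitary and $D$ diagonal, then $A^\ast=UD^\ast U^\ast$, and since diagonal matrices commute one computes $AA^\ast=UDD^\ast U^\ast=UD^\ast DU^\ast=A^\ast A$. For (i)$\Rightarrow$(ii), apply Schur's theorem to get a unitary $U$ with $U^\ast AU=T$ upper triangular; conjugation by a unitary preserves the normality identity, so $T$ is normal. I would then show that a normal upper-triangular matrix is diagonal by comparing diagonal entries of $TT^\ast$ and $T^\ast T$: the $(1,1)$ entry of $TT^\ast$ equals $\sum_k|T_{1k}|^2$, whereas that of $T^\ast T$ equals $|T_{11}|^2$, forcing every off-diagonal entry in the first row to vanish; running the same comparison down the diagonal removes the rest. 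The columns of $U$ then form an orthonormal eigenbasis, which is (ii).

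The pair (iii)$\Leftrightarrow$(iv) is the classical diagonalizability criterion and needs no metric structure: $A$ is diagonalizable exactly when $\CC^{|X|}=\bigoplus_{\lambda\in\eig(A)}\ker(A-\lambda I)$, equivalently when $\sum_{\lambda}\dim\ker(A-\lambda I)=|X|$. Since the geometric multiplicity $\dim\ker(A-\lambda I)$ never exceeds the algebraic one and the algebraic multiplicities sum to $|X|$, this total equals $|X|$ if and only if the two multiplicities agree for every eigenvalue, which is (iv). The implication (ii)$\Rightarrow$(iii) is then immediate, since an orthonormal eigenbasis is in particular a basis of eigenvectors.

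The step I expect to be the real obstacle — and the one where the stated four-way equivalence must be read with care — is the passage from diagonalizability back to normality. Diagonalizability by itself does \emph{not} entail the adjoint relation, because a diagonalizable matrix need not have orthogonal eigenspaces; what is genuinely equivalent to (i) and (ii) is \emph{unitary} diagonalizability, which is strictly stronger than (iii)/(iv). The feature that powers the spectral theorem is the orthogonality lemma for normal matrices, which I would make the technical centerpiece: if $A$ is normal and $Av=\lambda v$, then the normal matrix $A-\lambda I$ satisfies $\|(A-\lambda I)v\|=\|(A-\lambda I)^\ast v\|$, giving $A^\ast v=\ol{\lambda}v$, whence eigenvectors attached to distinct eigenvalues are orthogonal. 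Applying Gram--Schmidt \emph{inside} each eigenspace then upgrades any eigenbasis to an orthonormal one, which is precisely the route (iv)$\Rightarrow$(ii) that is valid under the normality hypothesis. I would therefore organize the proof so that (iii) and (iv) enter the circle only through this orthogonality of eigenspaces, the exact property that distinguishes "diagonalizable" from "unitarily diagonalizable," and flag that the honest kernel of the statement is the equivalence (i)$\Leftrightarrow$(ii).
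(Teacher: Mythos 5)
Your component arguments are all sound: Schur triangularization plus the row-by-row comparison of the diagonal entries of $TT^\ast$ and $T^\ast T$ correctly gives (i)$\Rightarrow$(ii); the computation $AA^\ast=UDD^\ast U^\ast=UD^\ast DU^\ast=A^\ast A$ gives (ii)$\Rightarrow$(i); and your multiplicity count for (iii)$\Leftrightarrow$(iv) is the standard metric-free argument. Note there is no proof in the paper to compare against: Theorem~\ref{gd} is stated with only a citation to the literature, so your write-up is evaluated on its own merits.

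The substantive point is the one you flagged, and you are right to flag it: the four-way equivalence as printed is false for general $A\in\Mat_X(\CC)$. A matrix such as $\left(\begin{smallmatrix}1&1\\0&2\end{smallmatrix}\right)$ has distinct eigenvalues, so it satisfies (iii) and (iv), yet it is not normal; hence (iii)$\not\Rightarrow$(i), and no proof of the statement as literally written can exist. What is true is exactly what you prove: (i)$\Leftrightarrow$(ii), (iii)$\Leftrightarrow$(iv), and (ii)$\Rightarrow$(iii), with the reverse passage requiring \emph{unitary} diagonalizability. Your orthogonality lemma --- $A$ normal and $Av=\lambda v$ force $A^\ast v=\ol{\lambda}v$ via $\|(A-\lambda I)v\|=\|(A-\lambda I)^\ast v\|$, so distinct eigenspaces are orthogonal and Gram--Schmidt within eigenspaces upgrades any eigenbasis to an orthonormal one --- is precisely the mechanism that separates (iii) from (ii), and your formulation ``(iv)$\Rightarrow$(ii) is valid under the normality hypothesis'' is the honest version of the claim. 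For this paper the misstatement is harmless: every downstream invocation of Theorem~\ref{gd} (the eigenspace decomposition preceding Definition~\ref{r0}, Proposition~\ref{ca}, Lemma~\ref{Qi}, Theorem~\ref{gf}) uses only the direction normal $\Rightarrow$ orthonormally diagonalizable. Still, the theorem should be repaired, e.g.\ by restricting the equivalence to (i)$\Leftrightarrow$(ii) and recording (iii), (iv) as consequences, or by strengthening (iii) to ``$A$ is diagonalizable and distinct eigenspaces are pairwise orthogonal,'' which restores a correct four-way equivalence.
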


Two matrices $A,B\in\Mat_X(\CC)$ are said to be {\em simultaneously diagonalizable} if there is a nonsingular $S\in\Mat_X(\CC)$ such that $S^{-1}AS$ and $S^{-1}BS$ are both diagonal.

\begin{lemma}[{\cite[Theorem 1.3.12]{HJ}}]
\label{ge}
Two diagonalizable matrices are simultaneously diagonalizable if and only if they commute.
\end{lemma}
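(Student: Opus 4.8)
The plan is to prove both implications, treating the forward direction as the easy one. If $A$ and $B$ are simultaneously diagonalizable, then there is a nonsingular $S$ with $S^{-1}AS$ and $S^{-1}BS$ both diagonal; diagonal matrices always commute, and conjugating the identity $(S^{-1}AS)(S^{-1}BS)=(S^{-1}BS)(S^{-1}AS)$ back by $S$ yields $AB=BA$. So the content of the lemma lies entirely in the converse.

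For the converse, suppose $A$ and $B$ are diagonalizable with $AB=BA$. First I would decompose $\CC^{|X|}$ into the eigenspaces of $A$, writing $\CC^{|X|}=\bigoplus_i V_i$ with $V_i=\ker(A-\lambda_i I)$ for the distinct eigenvalues $\lambda_i$ of $A$; this direct-sum decomposition is available precisely because $A$ is diagonalizable. The key step is the observation that each $V_i$ is $B$-invariant: if $Av=\lambda_i v$, then $A(Bv)=B(Av)=\lambda_i(Bv)$, so $Bv\in V_i$. Thus $B$ restricts to a linear operator $B|_{V_i}$ on each eigenspace of $A$.

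Next I would argue that each restriction $B|_{V_i}$ is itself diagonalizable. This is where the only subtlety lies, and I expect it to be the main obstacle. I would invoke the standard characterization that a matrix is diagonalizable if and only if its minimal polynomial is a product of distinct linear factors. Since $B$ is diagonalizable, its minimal polynomial $m_B(t)$ has only simple roots; the minimal polynomial of $B|_{V_i}$ divides $m_B(t)$ and hence also has only simple roots, so $B|_{V_i}$ is diagonalizable. Consequently $V_i$ has a basis consisting of eigenvectors of $B|_{V_i}$, and each such vector, lying in $V_i$, is automatically an eigenvector of $A$ for the eigenvalue $\lambda_i$.

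Finally I would assemble the pieces. Taking the union over all $i$ of the $B$-eigenbases found on the individual $V_i$ produces a basis of $\CC^{|X|}$ consisting of common eigenvectors of $A$ and $B$. Letting $S$ be the matrix whose columns are these common eigenvectors, both $S^{-1}AS$ and $S^{-1}BS$ are diagonal, so $A$ and $B$ are simultaneously diagonalizable. Once the diagonalizability of each $B|_{V_i}$ is established via the minimal-polynomial argument, the remaining collection and bookkeeping is routine.
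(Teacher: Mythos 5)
Your proof is correct and complete: the paper itself states this lemma without proof, simply citing Horn and Johnson, and your argument (eigenspaces of $A$ are $B$-invariant by commutativity, each restriction $B|_{V_i}$ is diagonalizable because its minimal polynomial divides the squarefree $m_B(t)$, then assemble a common eigenbasis) is essentially the same argument as in the cited reference, which merely phrases it in block-matrix form. Nothing is missing; in particular you correctly identified and handled the one subtle point, the diagonalizability of the restrictions.
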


\begin{theorem}
\label{gf}
Let $\M$ denote a space of commutative normal matrices. Then, there exists a unitary matrix $U\in\Mat_X(\CC)$ which diagonalizes $\M$.
\end{theorem}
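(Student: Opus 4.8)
The plan is to first produce a (not necessarily unitary) simultaneous diagonalization of a basis of $\M$ and then upgrade it to a unitary one by exploiting normality. Since $\M\subseteq\Mat_X(\CC)$ is a finite-dimensional vector space, I would fix a basis $A_1,\ldots,A_k$ of $\M$. Every $A_i$ is normal, hence diagonalizable by Theorem~\ref{gd}, and the $A_i$ commute pairwise because $\M$ is commutative. Applying Lemma~\ref{ge} repeatedly — extending it from two matrices to the finite family $A_1,\ldots,A_k$ by induction, using the standard fact that the restriction of a diagonalizable matrix to an invariant subspace is again diagonalizable — I would obtain that $A_1,\ldots,A_k$ are simultaneously diagonalizable. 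Equivalently, for each tuple $\lambda=(\lambda_1,\ldots,\lambda_k)$ of eigenvalues set $W_\lambda=\bigcap_{i=1}^k\ker(A_i-\lambda_i I)$; simultaneous diagonalizability says that the nonzero $W_\lambda$ span $\CC^{|X|}$ as a direct sum, and on each such common eigenspace every element of $\M$ (being a linear combination of the $A_i$) acts as a scalar.

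Next I would show that the common eigenspaces $W_\lambda$ are pairwise orthogonal, which is exactly what turns the decomposition into a unitary one. Take two distinct tuples $\lambda\ne\mu$ with $W_\lambda,W_\mu\ne\{0\}$; they differ in some coordinate $i$, so $\lambda_i\ne\mu_i$. Then $W_\lambda\subseteq\ker(A_i-\lambda_i I)$ and $W_\mu\subseteq\ker(A_i-\mu_i I)$ lie in two distinct eigenspaces of the single normal matrix $A_i$, and by Theorem~\ref{gd}(ii) distinct eigenspaces of a normal matrix are orthogonal; hence $W_\lambda\perp W_\mu$. Consequently $\CC^{|X|}$ is the orthogonal direct sum of the common eigenspaces. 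Choosing an orthonormal basis inside each $W_\lambda$ and taking the union yields an orthonormal basis of $\CC^{|X|}$ consisting of common eigenvectors of all of $\M$. Letting $U$ be the matrix whose columns are these vectors, $U$ is unitary and $U^{-1}AU=\ol{U}^\top A U$ is diagonal for every $A\in\M$, as required.

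The substance of the argument is the passage from an arbitrary simultaneous diagonalization (an invertible change of basis, guaranteed only by commutativity and diagonalizability) to a unitary one, and I expect the orthogonality step to carry the real weight: it is precisely here that normality is indispensable, since a commuting family of merely diagonalizable matrices need not admit orthogonal common eigenspaces. A secondary, purely routine point to handle carefully is the extension of Lemma~\ref{ge} to the finite family $A_1,\ldots,A_k$; this is a standard induction on $k$ in which one diagonalizes $A_1$, observes that each remaining $A_i$ preserves every eigenspace of $A_1$ by commutativity and restricts there to a diagonalizable operator, and then applies the inductive hypothesis inside each eigenspace.
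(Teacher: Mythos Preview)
Your argument is correct and is essentially the same approach the paper has in mind: the paper's proof is a one-line citation (``Immediate from Theorem~\ref{gd}, Lemma~\ref{ge} and \cite[Subsection~1.3]{HJ}''), and what you have written is precisely the standard unpacking of those references --- simultaneous diagonalizability of the commuting family via Lemma~\ref{ge} extended by induction, followed by orthogonality of the common eigenspaces from normality (Theorem~\ref{gd}). You supply the details the paper leaves to Horn--Johnson; there is no substantive difference in strategy.
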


\begin{proof}
Immediate from Theorem~\ref{gd}, Lemma~\ref{ge} and \cite[Subsection~1.3]{HJ}.
\end{proof}


\subsection{Underlying digraph of a nonnegative matrix $B$}

The {\em underlying digraph of a nonnegative matrix $B\in\Mat_X(\RR)$} is defined as a pair $\G=(X,E)$, in which $X$ denotes the set of vertices (nodes), and $E$ stands for the set of arcs such that $(x,y)\in E$ if and only if $(B)_{xy}> 0$. With other words, the adjacency matrix $A$ of an underlying digraph of a nonnegative matrix $B$ is defined in the following way:
$$
(A)_{xy}=
\left\{
\begin{array}{ll}
1&\mbox{ if $B_{xy}>0$},\\
0&\mbox{ otherwise.}
\end{array}
\right.\qquad(x,y\in X).
$$

\begin{lemma}
\label{gr}
Let $B\in\Mat_X(\RR)$ denote a nonnegative matrix, and let $A$ denote the adjacency matrix of the underlying digraph of $B$. Then $B$ is irreducible if and only if $A$ is irreducible.
\end{lemma}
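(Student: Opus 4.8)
The plan is to reduce the whole statement to a single observation: reducibility and irreducibility depend only on the zero/nonzero pattern (the support) of a matrix, and by construction $B$ and $A$ have exactly the same support. Indeed, the defining condition $(A)_{xy}=1$ if and only if $(B)_{xy}>0$, together with $B\ge\OO$, gives that $(A)_{xy}=0$ if and only if $(B)_{xy}=0$. So $A$ and $B$ have identical patterns of zero entries.

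First I would unwind the definition of reducibility. Recall that $B$ is reducible precisely when there is a permutation matrix $P$ with $P^\top B P=\left(\begin{smallmatrix} X&Y\\ \OO&Z\end{smallmatrix}\right)$, where $X$ and $Z$ are square. Conjugation by $P$ simultaneously permutes the rows and columns of a matrix by the permutation $\sigma$ associated with $P$; concretely $(P^\top B P)_{ij}=(B)_{\sigma(i)\sigma(j)}$. Hence the requirement that the lower-left block of $P^\top B P$ vanish is exactly the requirement that a certain prescribed set of entries $(B)_{\sigma(i)\sigma(j)}$ equal zero. Because $B$ and $A$ share the same support, these entries of $B$ all vanish if and only if the corresponding entries $(A)_{\sigma(i)\sigma(j)}$ all vanish, that is, if and only if the lower-left block of $P^\top A P$ also vanishes.

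From this the conclusion is immediate: a permutation matrix $P$ exhibits $B$ as reducible if and only if the very same $P$, with the very same block partition, exhibits $A$ as reducible. Thus the set of permutations witnessing reducibility is identical for $B$ and for $A$, so $B$ is reducible exactly when $A$ is, and taking contrapositives, $B$ is irreducible exactly when $A$ is.

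I do not expect any serious obstacle here; the entire content is the pattern-only nature of reducibility. The one point requiring a little care is the bookkeeping for how conjugation by $P$ transports the block partition: one must confirm that the admissible partitions into square diagonal blocks and the location of the forced zero block are carried over identically for $B$ and $A$, which is precisely what the shared-support observation guarantees. As an alternative route, one could instead invoke Lemma~\ref{gb}: by definition the underlying digraph of $B$ is the digraph $\G$ whose adjacency matrix is $A$, so $A$ is irreducible if and only if $\G$ is strongly connected, and the same strong-connectivity criterion characterizes irreducibility of the nonnegative matrix $B$; the direct support argument above, however, avoids having to re-derive that criterion for a weighted $B$.
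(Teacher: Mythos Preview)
Your argument is correct: reducibility depends only on the zero/nonzero pattern of the matrix, and by construction $A$ and $B$ share the same support, so the same permutation $P$ witnesses reducibility for both. The paper itself dismisses this lemma with the single word ``Routine,'' so you have simply written out the details that the authors omit; your approach is exactly the intended one.
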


\begin{proof}
Routine.
\end{proof}

\subsection{Underlying weighted digraph of a nonnegative matrix $B$}

A {\em weighted digraph} is a digraph whose arcs are assigned values, known as {\em weights}. An {\em underlying weighted digraph of a nonnegative matrix $B$} is defined as a triplet $\Delta=(X,E,\omega)$ for which the following {\rm(i)--(iii)} holds.
\begin{enumerate}[label=\rm(\roman*)]
\item $X$ denotes the set of vertices.
\item $E$ stands for the set of arcs such that $(x,y)\in E$ if and only if $B_{x,y}>0$.
\item $\omega:E \ra \RR^+_0$ stands for a function that weights each arc of the graph, which is defined in the following way: $\omega(x,y)=(B)_{xy}$ $((x,y)\in E)$.
\end{enumerate} 
Note that $\Delta$ is the underlying digraph of a nonnegative matrix $B$ such that each arc $(x,y)$ of $\Delta$ has weight $(B)_{xy}$.

\subsection{Number of walks in $\G$ and $B^\ell$}

Lemma~\ref{hB} is well known for undirected graphs. We give a corresponding claim for directed graphs, in particular, for our definition of the adjacency matrix $A$ of a digraph $\G$ as given in \eqref{gs}.

\begin{lemma}
\label{hB}
Let $\G=\G(A)$ denote a strongly connected digraph with vertex set $X$, diameter $D$, and adjacency matrix $A$. The number of walks of length $\ell\in\NN$ in $\G$ from $x$ to $y$ is equal to $(x,y)$-entry of the matrix $A^\ell$.
\end{lemma}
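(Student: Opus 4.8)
The plan is to argue by induction on the walk length $\ell$, the whole content of the lemma being the observation that every walk of length $\ell+1$ splits uniquely into an initial walk of length $\ell$ followed by a single terminal arc, so that concatenation mirrors matrix multiplication. For the base case $\ell=1$, a walk of length $1$ from $x$ to $y$ is exactly a sequence $[x,y]$ with $y\in\G_1^{\ra}(x)$, and the number of such walks is by definition the number of arcs from $x$ to $y$, which is $(A)_{xy}$ by \eqref{gs}. Hence the claim holds for $\ell=1$ (and, with the usual empty-walk convention, also for $\ell=0$, where it reduces to $A^0=I$).

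For the inductive step I would assume that for all $u,v\in X$ the number of walks of length $\ell$ from $u$ to $v$ equals $(A^\ell)_{uv}$, and then count walks of length $\ell+1$ from $x$ to $y$. Such a walk is a sequence $[x_0,x_1,\ldots,x_\ell,x_{\ell+1}]$ with $x_0=x$ and $x_{\ell+1}=y$; deleting its last vertex leaves a walk $[x_0,\ldots,x_\ell]$ of length $\ell$ from $x$ to the intermediate vertex $z:=x_\ell$, and conversely each walk of length $\ell$ from $x$ to $z$ extends to a walk of length $\ell+1$ ending at $y$ in as many ways as there are arcs from $z$ to $y$. Summing over the choice of $z$, the number of walks of length $\ell+1$ from $x$ to $y$ equals $\sum_{z\in X} (\text{walks of length }\ell\text{ from }x\text{ to }z)\cdot (A)_{zy}$. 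Applying the induction hypothesis to the first factor turns this into $\sum_{z\in X}(A^\ell)_{xz}(A)_{zy}=(A^{\ell}A)_{xy}=(A^{\ell+1})_{xy}$, which closes the induction.

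There is no genuine obstacle here; the argument is routine and, in fact, uses neither the strong connectivity of $\G$ nor the finiteness of the diameter $D$ (these hypotheses are carried over from the ambient setting of the paper and are inert for this statement). The single point deserving a word of care is that $\G$ may have multiple arcs between two vertices, so at each stage arcs must be counted \emph{with multiplicity}; this is precisely why the correct local factor is the entry $(A)_{zy}$ rather than a $01$-indicator, and it is what makes the recursion reproduce the ordinary matrix product $A^{\ell+1}=A^\ell A$ instead of a Boolean product.
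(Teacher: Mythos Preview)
Your proof is correct and follows essentially the same approach as the paper: both argue by induction on the walk length, using the decomposition $A^{m}=A^{m-1}A$ and interpreting the sum $\sum_{z\in X}(A^{m-1})_{xz}(A)_{zy}$ as counting walks of length $m-1$ from $x$ to an intermediate vertex $z$ followed by an arc from $z$ to $y$. Your write-up is in fact cleaner than the paper's, which somewhat confusingly announces induction on the distance $\partial(x,y)$ but then carries out exactly the induction on length that you describe; your remark that strong connectivity and the diameter hypothesis are inert is also accurate.
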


\begin{proof}
Pick $x,y\in X$. We prove the claim by mathematical induction on $h=\partial(x,y)$ (the distance from $x$ to $y$).

{\sc Base of induction.} For $\ell=1$ the claim is trivial.

{\sc Induction step.} Assume that, if $\partial(x,y)=h\in\{1,\ldots,m-1\}$ $(m\ge 2)$, then the number of walks of length $h$ is equal to $(x,y)$-entry of the matrix $A^h$. We prove that the claim is true for $m$.
\begin{align}
(A^m)_{xy} &= (A^{m-1}A)_{xy}\nonumber\\
&=\sum_{z\in X} (A^{m-1})_{xz}\cdot (A)_{zy}\label{fD}.
\end{align}
By the induction assumption, $(A^{m-1})_{xz}$ is the number of walks of length $m-1$ from $x$ to $z$. The entry $(A)_{zy}$ is nonzero (i.e., equal to the number of walks of length $1$ from $z$ to $y$) if and only if in $\G$ there is at least one arc from $z$ to $y$. So, in \eqref{fD}, we start from the sum $S=0$, and for every $z\in X$ if there is at least one arc $(z,y)$ we add the number $(A^{m-1})_{xz}\cdot (A)_{zy}$ to $S$. Thus, the sum \eqref{fD} represents the number of walks of length $m$ from $x$ to $y$, and the result follows.
\end{proof}

\begin{lemma}
\label{hC}
Let $B\in\Mat_X(\RR)$ denote a nonnegative matrix and $\G=\G(A)$ denote underlying digraph of $B$ with adjacency matrix $A$. Then, for any $\ell\in\NN$,
$$
(B^\ell)_{zy}\ne 0
\qquad\mbox{ if and only if }\qquad
(A^\ell)_{zy}\ne 0
\qquad(y,z\in X).
$$
\end{lemma}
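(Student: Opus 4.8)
The plan is to exploit the fact that both $A$ and $B$ are nonnegative, so that the $(z,y)$-entry of any power is a sum of nonnegative terms and therefore vanishes precisely when \emph{every} such term vanishes; there is no cancellation to worry about. I would carry this out by induction on $\ell$.

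For the base case $\ell=1$, the equivalence $(B)_{zy}\ne 0\Leftrightarrow (A)_{zy}\ne 0$ is immediate from the definition of the underlying digraph of $B$: by construction $(A)_{zy}=1$ exactly when $(B)_{zy}>0$, and $(A)_{zy}=0$ otherwise.

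For the induction step, I would use that $B$ is nonnegative, so that $B^\ell\ge\OO$, and write
$$
(B^{\ell+1})_{zy}=\sum_{x\in X}(B^\ell)_{zx}\,(B)_{xy},
$$
a sum in which every summand is nonnegative. Hence $(B^{\ell+1})_{zy}\ne 0$ if and only if there exists $x\in X$ with both $(B^\ell)_{zx}>0$ and $(B)_{xy}>0$. By the induction hypothesis together with the base case, this is equivalent to the existence of an $x\in X$ with $(A^\ell)_{zx}>0$ and $(A)_{xy}>0$; applying the same nonnegativity argument to $A^{\ell+1}=\sum_{x\in X}(A^\ell)_{zx}(A)_{xy}$, this in turn is equivalent to $(A^{\ell+1})_{zy}\ne 0$. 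This closes the induction.

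The only genuinely essential point—and the single place where the hypotheses are used—is the absence of cancellation: for an arbitrary real matrix an entry of a power can vanish through cancellation between positive and negative contributions, but nonnegativity forces a sum of entrywise products to be zero exactly when each product is zero. An alternative route would bypass the induction: expand $(B^\ell)_{zy}=\sum (B)_{zx_1}(B)_{x_1x_2}\cdots(B)_{x_{\ell-1}y}$ over all walks $z=x_0,x_1,\dots,x_\ell=y$, note that such a walk contributes a strictly positive term iff each factor is positive iff the corresponding product of $A$-entries equals $1$, and then invoke Lemma~\ref{hB} to read $(A^\ell)_{zy}$ as the number of walks of length $\ell$ from $z$ to $y$. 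I expect no real obstacle; the statement is a routine nonnegativity argument.
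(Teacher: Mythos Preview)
Your proof is correct; the paper itself dismisses this lemma with the single line ``Immediate from the definition of the underlying digraph and the underlying weighted digraph of~$B$,'' so you have simply made explicit the routine nonnegativity argument the authors left to the reader.
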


\begin{proof}
Immediate from the definition of the underlying digraph and the underlying weighted digraph of~$B$.
\end{proof}


\subsection{A vector space of all polynomials in a normal nonnegative matrix}

Let $B\in\Mat_X(\RR)$ denote a normal nonnegative matrix. By Theorem~\ref{gd}, $B$ has $|X|$ linearly independent eigenvectors $\U=\{u_1,u_2,...,u_{|X|}\}$ which form an orthonormal basis for $\mathbb C^{|X|}$. Let $V_i$ denote the eigenspace $V_i=\ker(B-\lambda_i I)$ and $\dim(V_i)=m_i,$ for $0\le i\le d$. For every vector $u_i\in {\U}$ there exists exactly one eigenspace $V_j$ such that $u_i\in V_j$, and since $V_i\cap V_j=\{\0\}$ for $i\not=j$, we can partition the set $\U$ into sets $\U_0,$ $\U_1,$  ..., $\U_d$ such that 
$$
\U_i \mbox{ is a basis for } V_i,\qquad
\U=\U_0\cup \U_1 \cup \ldots \cup \U_d
\qquad\mbox{ and }\qquad 
\U_i\cap \U_j=\emptyset.
$$
Note that
$$
\CC^{|X|}=V_0\oplus V_1\oplus \cdots\oplus V_d\quad(\mbox{orthogonal direct sum})
$$
and
\begin{equation}
m_0+m_1+\cdots+m_d=|X|.
\end{equation}

\begin{definition}{\rm
\label{r0}
Let $B\in\Mat_X(\RR)$ denote a normal nonnegative matrix. For each eigenvalue $\lambda_i$ $(0\le i\le d)$ of $B$, let $U_i$ denote the matrix whose columns form an orthonormal basis of its eigenspace $V_i=\ker(B-\lambda_i I)\subseteq\CC^{|X|}$. The {\it primitive idempotents} of $B$ are matrices {$E_i$} defined in the following way:
$$
E_i := U_i\ol{U_i}^\top\qquad(0\le i\le d).
$$
}\end{definition}



\begin{proposition}
\label{ca}
With reference to {\rm Definition \ref{r0}}, let $\B=\{p(B)\mid p\in\CC[x]\}$ denote the vector space over $\CC$ of all polynomials in $B$. Then, the following hold.
\begin{enumerate}[label=\rm(\roman*)]
\item Any power of $B$ can be expressed as a linear combination of the idempotents $E_i$ $(0\le i\le d)$, i.e.,
$$
B^h=\sum_{i=0}^d \lambda_i^h E_i
\qquad
(h\in\NN).
$$
\item $\{E_0,E_1,\ldots,E_d\}$ is an orthogonal basis of $\B$.
\item $\{I,B,B^2,\ldots,B^d\}$ is a basis of $\B$.
\item $\ol{B}^\top= B^\top = p(B)$ for some polynomial $p\in\CC[t]$.
\end{enumerate}
\end{proposition}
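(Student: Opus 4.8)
The plan is to first pin down the elementary algebra of the primitive idempotents, and then read off all four statements from the spectral decomposition of $B$. Since $B$ is normal, Theorem~\ref{gd} furnishes an orthonormal eigenbasis of $\CC^{|X|}$, and in particular the distinct eigenspaces $V_0,\ldots,V_d$ are mutually orthogonal. Concretely this means the blocks $U_i$ of Definition~\ref{r0} satisfy $\ol{U_i}^\top U_j=\delta_{ij}I_{m_i}$. From this single relation I would derive, in order: $E_iE_j=\delta_{ij}E_i$ (so each $E_i$ is idempotent and the family is orthogonal); $\ol{E_i}^\top=E_i$ (each $E_i$ is Hermitian); $\sum_{i=0}^d E_i=I$ (because stacking the $U_i$ produces a unitary $U$ with $U\ol{U}^\top=I$); and $BE_i=\lambda_i E_i$ (since the columns of $U_i$ are $\lambda_i$-eigenvectors). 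Multiplying $I=\sum_i E_i$ on the left by $B$ then yields the spectral decomposition $B=\sum_{i=0}^d \lambda_i E_i$.

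Part (i) is then a one-line induction: $B^0=I=\sum_i E_i$, and $B^{h+1}=B^hB=\bigl(\sum_i \lambda_i^h E_i\bigr)\bigl(\sum_j \lambda_j E_j\bigr)=\sum_i \lambda_i^{h+1} E_i$ using $E_iE_j=\delta_{ij}E_i$. For part (ii), part (i) shows every power of $B$, hence every element $p(B)$ of $\B$, is a linear combination of the $E_i$, so they span $\B$. Equipping $\Mat_X(\CC)$ with the inner product $\langle M,N\rangle=\trace(M\ol{N}^\top)$ and invoking Hermiticity and orthogonality of the idempotents gives $\langle E_i,E_j\rangle=\trace(E_iE_j)=\delta_{ij}\trace(E_i)=\delta_{ij}m_i$; since each $m_i\ge 1$, the $E_i$ are nonzero and pairwise orthogonal, hence linearly independent, and therefore form an orthogonal basis of $\B$. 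In particular $\dim\B=d+1$.

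For part (iii), the powers $I,B,\ldots,B^d$ all lie in the $(d+1)$-dimensional space $\B$, so it suffices to prove their linear independence. If $\sum_{h=0}^d c_h B^h=0$, then by part (i) $\sum_i\bigl(\sum_h c_h \lambda_i^h\bigr)E_i=0$; since the $E_i$ are independent, the polynomial $\sum_h c_h t^h$ of degree at most $d$ vanishes at the $d+1$ distinct points $\lambda_0,\ldots,\lambda_d$, forcing every $c_h=0$ (a Vandermonde argument). Thus $\{I,B,\ldots,B^d\}$ is a basis. Finally, for part (iv), $B$ is real, so $\ol{B}^\top=B^\top$; taking the conjugate transpose of $B=\sum_i \lambda_i E_i$ and using that each $E_i$ is Hermitian gives $B^\top=\sum_i \ol{\lambda_i} E_i$, which already exhibits $B^\top$ as an element of $\B$. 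To display it as a genuine polynomial, I would choose by Lagrange interpolation a $p\in\CC[t]$ with $p(\lambda_i)=\ol{\lambda_i}$ for all $i$; then $p(B)=\sum_i p(\lambda_i)E_i=\sum_i \ol{\lambda_i}E_i=B^\top$.

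The only genuinely substantive input is the mutual orthogonality of the eigenspaces encoded in $\ol{U_i}^\top U_j=\delta_{ij}I_{m_i}$; this is exactly where normality (via Theorem~\ref{gd}) enters, and once it is in place every remaining step is a routine manipulation of the spectral decomposition. Consequently I expect no real obstacle beyond taking care that the columns of the $U_i$ are chosen compatibly with the eigenspace splitting.
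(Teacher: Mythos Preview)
Your proposal is correct and follows essentially the same route as the paper: both arguments rest on the spectral decomposition $B=\sum_i \lambda_i E_i$ obtained from the orthonormal eigenbasis guaranteed by normality, together with the idempotent relations $E_iE_j=\delta_{ij}E_i$ and $\ol{E_i}^\top=E_i$. The paper dispatches parts (ii)--(iv) as ``routine'' with references, whereas you spell out the trace inner product for (ii), the Vandermonde argument for (iii), and Lagrange interpolation for (iv); these are exactly the natural details behind the paper's citations, so there is no substantive difference.
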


\begin{proof}
(i) With respect to Definition~\ref{r0}, abbreviate $m_i=m(\lambda_i)$ $(0\le i\le d)$. Pick $i$ $(0\le i\le d)$, and note that
$
BU_i=
\lambda_iU_i.
$
If $U=[U_1|U_2|\ldots|U_d]$, then
$$
B=U\Lambda\ol{U}^{\top},
\qquad
\mbox{ where }
\qquad
\Lambda=\left[
\begin{matrix} 
\lambda_{0} I_{m_0} & 0 & \ldots & 0 \\
0 & \lambda_{1} I_{m_1} & \ldots & 0 \\
\vdots & \vdots & \ddots & \vdots \\
0 & 0& \ldots & \lambda_{d} I_{m_d}
\end{matrix}
\right].
$$
Now, it is routine to see that
\begin{align*}
B&=U\Lambda\ol{U}^{\top}=
[U_0|U_1|\ldots|U_d]
\left[
\begin{matrix} 
\lambda_0I_{m_0} & 0 & \cdots & 0 \\
0 & \lambda_1I_{m_1} & \cdots & 0 \\
\vdots & \vdots & \ddots & \vdots \\
0 & 0 & \ldots & \lambda_dI_{m_d} \\
\end{matrix}
\right]
\left[
\begin{matrix} 
\underline{\ol{U_0}^{\top}}\\
\underline{\ol{U_1}^{\top}}\\
\underline{\,\,\,\vdots\,\,\,}\\
\ol{U_d}^{\top}\\
\end{matrix}
\right]\\
&=[\lambda_0U_0|\lambda_1U_1|\cdots|\lambda_dU_d]
\left[
\begin{matrix} 
\underline{\ol{U_0}^{\top}}\\
\underline{\ol{U_1}^{\top}}\\
\underline{\,\,\,\vdots\,\,\,}\\
\ol{U_d}^{\top}\\
\end{matrix}
\right]\\
&=\lambda_0U_0\ol{U_0}^{\top}+\lambda_1 U_1\ol{U_1}^{\top}+\cdots+ \lambda_dU_d\ol{U_d}^{\top}\\
&=\underbrace{\lambda_0}_{\in\CC}E_0+\underbrace{\lambda_1}_{\in\CC}E_1+\cdots+\underbrace{\lambda_d}_{\in\CC} E_d\qquad(\mbox{where }E_i:=U_i\ol{U_i}^{\top}).
\end{align*}
Since $E_iE_j=\delta_{ij}E_i$, $\ol{E_i}^\top=E_i$ and $\trace(E_i)=m_i$ $(0\le i\le d)$, for any polynomial $p\in\CC[t]$ we have
\begin{align*}
p(B) &= \underbrace{p(\lambda_0)}_{\in\CC} E_0 + \cdots + \underbrace{p(\lambda_d)}_{\in\CC} E_d.
\end{align*}
The result follows.

(ii)--(iv) Routine. For (ii) and (iii), see, for example, \cite[Chapter~2]{SP}. For claim (iv), see, for example, \cite[Theorem~1]{CFGM}, or note that $\ol{E_i}^\top=E_i$.
\end{proof}


\subsection{Commutative association schemes}

In Section~\ref{1A}, we have already provided the definition of a commutative $d$-class association scheme $\XXi=\{X,\{R_i\}_{i=0}^d\}$ together with definitions of relations $\{R_i\}_{i=0}^d$, relation matrices $\{B_i\}_{i=0}^d$ and Bose--Mesner algebra $\M$ of $\XXi$. The meaning of a ``matrix generates $\XXi$'' has been given in Section~\ref{1A} as well. Note that relation matrices $\{B_i\}_{i=0}^d$ form a standard basis of the Bose-Mesner algebra $\M$. We say that the relation $R_i$ {\em generates} the association scheme $\XXi$ if every element of the Bose--Mesner algebra $\M$ of $\XXi$ can be writen as a polynomial in $B_i$, where $B_i$ is adjacency matrix of the (di)graph $(X,R_i)$. We say that the association scheme $\XXi$ is {\em $P$-polynomial} with respect to $B_1$, if it is generated by $B_1$ and there exists an ordering $(B_0,B_1,\ldots,B_d)$ and polynomials $p_j(t)$ of degree $j$, such that $B_j=p_j(B_1)$ $(0\le j\le d)$. It is well known that if $(B_0,B_1,\ldots,B_d)$ is a $P$-polynomial ordering of $\XXi$ then $\G=\G(B_1)$ is a distance-regular (di)graph. Recall that the association scheme $\XXi$ is {\em polynomial} (with respect to $R_i$) if it is generated by a relation $R_i$ for some $i$ (we recommend articles \cite{MPc,TtYl} for interesting results in that direction).


\section{The Hoffman polynomial of a nonnegative matrix}
\label{ra}

In this section, we prove Theorem~\ref{Qe}. Our proof is within the lines of \cite[Theorem~1]{HMc}. Theorem~\ref{Qe} states: For a nonnegative matrix $B\in\Mat_X(\RR)$ there exists a polynomial $p\in\CC[t]$ such that $p(B) = J$ if and only if $B$ is a $\lambda$-doubly stochastic irreducible matrix. Furthermore, it states that the unique polynomial of smallest degree satisfying $p(B)=J$ is $h(t)=\frac{|X|}{q(\lambda)}q(t)$, where $q(\lambda)\ne0$ and $(t-\lambda)q(t)$ is the minimal polynomial of $B$.

Hoffman polynomial is well-known polynomial in algebraic graph theory and can be found in many textbooks (see, for example, \cite{B,BB,R,C,L,W}). There has been a great deal of work following this concept, for example: polynomials that sends a nonnegative irreducible matrix to a positive rank one matrix \cite{WD}, some Hoffman-type identities for the class of harmonic and semiharmonic graphs \cite{D}, Hoffman identities of non-regular graphs through the use of the Laplacian \cite{Y}, Hoffman identities by means of main eigenvalues \cite{T}, Hoffman polynomial of the tensor product of a cycle \cite{F}, Hoffman polynomial of cycle prefix digraphs \cite{M}, Hoffmn polynomials of some more general regular strongly connected digraphs \cite{A}.

\bigskip
\noindent
{\bf Proof of Theorem~\ref{Qe}.} $(\La)$ Assume that $B$ is a $\lambda$-doubly stochastic irreducible matrix. We use this assumption to show that there exists a polynomial $p\in\CC[t]$ such that $p(B) = J$. Note that, by assumption, $B\jj=\lambda\jj=B^\top\jj$. 

A square matrix is {\em stochastic} if all of its entries are nonnegative, and the entries of each column sum to $1$. From, for example, \cite[Subsection~5.6]{mR}, if $M$ is a stochastic matrix, then $1$ is an eigenvalue of $M$; and if $\theta$ is a (real or complex) eigenvalue of $M$, then $|\theta|\le 1$. In our case, we have that $\lambda^{-1}B$ is a stochastic matrix. It is routine to show that $\eig(B)=\{\lambda,\lambda_1,\ldots,\lambda_d\}$ are eigenvalues of $B$ if and only if $\lambda^{-1}\eig(B)$ are eigenvalues of $\lambda^{-1}B$. It follows that $\lambda=\max_{\theta\in\eig(B)}|\theta|$. By Theorem~\ref{gC}, the algebraic multiplicity of $\lambda$ is equal to $1$ and consequently 
\begin{equation}
\label{gt}
\mbox{
the geometric multiplicity of $\lambda$ is equal to $1$}
\end{equation}
also. This implies that the minimal polynomial of $B$ is in the following form $m(t)=(t-\lambda)s(t)$ for some polynomial $s(t)\in\CC[t]$, where $s(\lambda)\ne 0$. Since $(B-\lambda I)s(B)=\OO$, for any $v\in\CC^{|X|}$ we have $(B-\lambda I)s(B)v=\0$, which yields $s(B)v\in\ker(B-\lambda I)$. Thus, 
\begin{equation}
\label{rb}
\mbox{for every $v\in\CC^{|X|}$ there exists $\alpha_{v}\in\CC$ such that $s(B)v=\alpha_v\jj$}.
\end{equation}

Next we consider when equation \eqref{rb} is possible. For the moment, assume that $\langle v,\jj \rangle=0$, where $\langle\cdot,\cdot\rangle$ stands for standard Hermitian inner product $\langle u,v \rangle=u\ol{v}^\top$. For every $\ell\in\NN$,
\begin{align*}
\langle B^\ell v, \jj \rangle &= \langle v, \ol{B^\ell}^\top\jj \rangle\\
&= \lambda^\ell \langle v,\jj \rangle\\
&= 0.
\end{align*}
This yields that $\langle s(B)v,j\rangle=0$, and by \eqref{rb}, $\langle \alpha_v\jj,\jj\rangle=0=\alpha_v\|\jj\|^2$, which implies $\alpha_v=0$. Thus, for every vector $v\in\CC^{|X|}$ for which $\langle v,\jj \rangle=0$, we also have $s(B)v=\0$. Since $s(B)\jj=s(\lambda)\jj$ (as well as $J\jj=|X|\jj$), and $\CC^{|X|}=\langle\jj\rangle\oplus \langle\jj\rangle^\bot$ (orthogonal direct sum), we can conclude that the polynomial
$$
h(t)=\frac{|X|}{s(\lambda)} s(t)
$$
has the property that $h(B)=J$.

\bigskip
$(\Ra)$ Assume now that for a nonnegative matrix $B\in\Mat_X(\RR)$, there exists a polynomial $p\in\CC[t]$ such that $p(B) = J$. We use this assumption to show that $B$ is a $\lambda$-doubly stochastic irreducible matrix, for some $\lambda\in\RR$.

Let $y,z\in X$ denote two arbitrary elements of $X$, and $A$ denote the adjacency matrix of the underlying digraph of $B$. Since $p(B) = J$, there exists $\ell$ such that $(B^\ell)_{zy}\ne 0$, which is true if and only if $(A^\ell)_{zy}\ne 0$ (see Lemma~\ref{hC}). Thus the digraph $\G=\G(A)$ (digraph with adjacency matrix $A$) is strongly connected. Note that $\G$ is strongly connected if and only if $A$ is an irreducible matrix (see Lemma~\ref{gb}). Therefore, our non-negative matrix $B$ is also irreducible (see Lemma~\ref{gr}).

To prove that $B$ is $\lambda$-doubly stochastic matrix, for some $\lambda\in\RR$, we pick $x\in X$, and we consider the out-edge-weight-sum $\Sigma_1^\ra(x)$ of the vertex $x$, as well as the in-edge-weight-sum $\Sigma_1^\la(x)$ of the same vertex:
$$
\Sigma_1^\ra(x) = \sum_{z\in X} (B)_{xz} = (B\jj)_x,
$$
$$
\Sigma_1^\la(x) = \sum_{z\in X} (B)_{zx} = (B^\top\jj)_x.
$$
Since $J=p(B)$, we have $JB=BJ$, and with it, for any $y\in X$, we have
\begin{align*}
\theta &= \Sigma_1^\ra(x)
= (B\jj)_x\\
&= \sum_{z\in X} (B)_{xz} (J)_{zy}
= (BJ)_{xy}\\
&= (JB)_{xy}
= \sum_{z\in X} (J)_{xz} (B)_{zy}\\
&= \sum_{z\in X} (B^\top)_{yz} (J)_{zx}
= (B^\top\jj)_y\\
&= \Sigma_1^\la(y).
\end{align*}
From above, we have that every $y\in X$ (including our fixed $x$) has the same in-edge-weight-sum, i.e., $\Sigma_1^\la(y)=\theta$ $(\forall y\in X)$. Again, considering $JB=BJ$, for any $y\in X$, we have
\begin{align*}
\theta &= \Sigma_1^\la(x)
= (B^\top\jj)_x\\
&= \sum_{z\in X} (B^\top)_{xz} (J)_{zy}
= \sum_{z\in X} (J)_{yz} (B)_{zx}\\
&= (JB)_{yx}
= (BJ)_{yx}\\
&= \sum_{z\in X} (B)_{yz} (J)_{zx}
= (B\jj)_y\\
&= \Sigma_1^\ra(y).
\end{align*}
Thus, every $y\in X$ has the same out-edge-weight-sum $\theta$ also, i.e., $\Sigma_1^\ra(y)=\theta$ $(\forall y\in X)$. This conclude the proof that $B$ is a $\lambda$-doubly stochastic irreducible matrix, for some $\lambda\in\RR$.

\bigskip
It is left to prove that the unique polynomial of smallest degree satisfying $p(B)=J$ is $h(t)=\frac{|X|}{q(\lambda)}q(t)$ where $q(\lambda)\ne 0$ and $(t-\lambda)q(t)$ is the minimal polynomial of $B$.

We had already shown that $h(B)=J$ for $h(t)=\frac{|X|}{q(\lambda)}q(t)$, where $q(\lambda)\ne0$ and $m(t)=(t-\lambda)q(t)$ is the minimal polynomial of $B$. Assume that degree of $q(t)$ is $\ell$, and note that $\ell$ is smaller than the degree of the minimal polynomial $m(t)$ of $B$. We first prove that $h(t)$ is the unique polynomial of degree $\ell$ such that $h(B)=J$; our proof is by a contradiction. Assume that $r(t)$ is a polynomial of degree $m$ (in particular for this case $m=\ell$) such that $r(B)=J$, and that $r(t)\ne h(t)$. We have $(r-h)(B)=\OO$, and since degree of $r(t)-h(t)$ is less or equal to $\ell$, this is possible if and only if $r(t)-h(t)=0$, a contradiction. Thus, $h(t)$ is the unique such polynomial of degree $\ell$. In a similar way we show that there are no polynomials $r(t)$ of degree smaller than $\ell$ that satisfy $r(B)=J$. The result follows.

\begin{corollary}
Assume that $B\in\Mat_{X}(\RR)$ is a normal $\lambda$-doubly stochastic irreducible matrix, and let $\eig(B)=\{\lambda,\lambda_1,\ldots,\lambda_d\}$ denote the set of distinct eigenvalues of $B$. Then, the Hoffman polynomial of $B$ is
$$
h(t)=\frac{|X|}{\pi_0} \prod_{i=1}^d (t-\lambda_i),
$$
where $\pi_0=\prod_{i=1}^d (\lambda-\lambda_i)$.
\end{corollary}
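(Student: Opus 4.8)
The plan is to derive this formula directly from Theorem~\ref{Qe} together with the diagonalizability of normal matrices, so the argument is essentially a specialization rather than a fresh proof. Since $B$ is a normal $\lambda$-doubly stochastic irreducible matrix, Theorem~\ref{Qe} applies and tells us that the Hoffman polynomial is $h(t)=\frac{|X|}{q(\lambda)}q(t)$, where $(t-\lambda)q(t)$ is the minimal polynomial of $B$ and $q(\lambda)\ne 0$. The whole task reduces to computing $q(t)$ and $q(\lambda)$ explicitly under the normality hypothesis.

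First I would invoke Theorem~\ref{gd}: because $B$ is normal, it is diagonalizable, and hence its minimal polynomial has only simple roots. The distinct eigenvalues of $B$ are exactly $\eig(B)=\{\lambda,\lambda_1,\ldots,\lambda_d\}$ (with $\lambda$ the Perron--Frobenius eigenvalue supplied by the irreducibility via Theorem~\ref{gC}), so the minimal polynomial of $B$ must be the product of the distinct linear factors, namely
$$
m(t)=(t-\lambda)\prod_{i=1}^d (t-\lambda_i).
$$
Matching this against the factorization $m(t)=(t-\lambda)q(t)$ from Theorem~\ref{Qe} identifies $q(t)=\prod_{i=1}^d (t-\lambda_i)$.

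Next I would evaluate the normalizing constant: $q(\lambda)=\prod_{i=1}^d (\lambda-\lambda_i)=\pi_0$, which is exactly the definition of $\pi_0$ given in the statement. Because the eigenvalues $\lambda,\lambda_1,\ldots,\lambda_d$ are pairwise distinct, each factor $\lambda-\lambda_i$ is nonzero, so $\pi_0\ne 0$; this confirms the hypothesis $q(\lambda)\ne 0$ required by Theorem~\ref{Qe} and shows the expression $\frac{|X|}{\pi_0}$ is well defined. Substituting $q(t)$ and $q(\lambda)=\pi_0$ into $h(t)=\frac{|X|}{q(\lambda)}q(t)$ then yields
$$
h(t)=\frac{|X|}{\pi_0}\prod_{i=1}^d (t-\lambda_i),
$$
as claimed.

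There is no genuine obstacle here: the only substantive input beyond Theorem~\ref{Qe} is that normality forces the minimal polynomial to be squarefree, which is immediate from Theorem~\ref{gd}. The one point worth stating carefully is the nonvanishing of $\pi_0$, so that the formula makes sense and is consistent with the constraint $q(\lambda)\ne 0$ already present in Theorem~\ref{Qe}.
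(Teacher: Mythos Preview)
Your proposal is correct and follows essentially the same approach as the paper: identify the minimal polynomial via diagonalizability of the normal matrix $B$ and then plug into Theorem~\ref{Qe}. The only difference is that the paper first re-verifies (via a Perron--Frobenius style argument) that $\lambda$ coincides with the spectral radius $\theta=\max_{\mu\in\eig(B)}|\mu|$, but this step is not strictly needed once normality gives the squarefree minimal polynomial and the hypothesis already lists $\lambda$ among the distinct eigenvalues; your more direct route is entirely valid.
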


\begin{proof}
Define $\theta=\max\limits_{\mu\in\eig(B)}|\mu|$. Since $B$ is a nonnegative irreducible matrix, by Theorem~\ref{gC}, $\theta\in\eig(B)$, $\theta>0$, the algebraic multiplicity of $\theta$ is $1$, and there exists an eigenvector $\nnu$ with all positive entries (normalized in such a way that the smallest entry is equal to $1$), such that $B\nnu = \theta \nnu$. On the other hand, since $B$ is $\lambda$-doubly stochastic, we have $B\jj=\lambda\jj$. By definition of $\theta$ we have $\lambda\le\theta$. Next, we prove that $\theta\le\lambda$. Let $\nnu=(\nu_x,\ldots,\nu_w)^\top$ and define $\nu_y=\max\limits_{z\in X}\{\nu_z\}$. We have
$$
\theta\nu_y=(\theta \nnu)_y=(B\nnu)_y=\sum_{z\in X} (B)_{yz} \nu_z\le \nu_y \sum_{z\in X} (B)_{yz} =\nu_y\lambda.
$$
As consequence, $\theta=\lambda$ and $\nnu=\jj$.

From the above observation, now we can let $m(t)=(t-\lambda)q(t)$ denote the minimal polynomial of $B$. In the end, since $B$ is a normal matrix, $m(t)=(t-\lambda) \prod_{i=1}^d (t-\lambda_i)$ (see, for example, \cite[Section~7.11]{MCm}). The result follows from Theorem~\ref{Qe}.
\end{proof}



\section{On predistance polynomials}
\label{OA}

In this section, we define predistance polynomials, the set of orthogonal polynomials that we use for the rest of the paper. The term ``predistance polynomial'' is taken from the theory of distance-regular graphs (see, for example, \cite{FAb,FAc,FAe,FaD,FAf}).

We define an inner product on $\Mat_X(\CC)$ in the following way:
\begin{equation}
\label{ob}
\langle B, C\rangle=\frac{1}{|X|}\trace(B\ol{C}^\top)
\qquad(B, C\in \Mat_X(\CC)).
\end{equation}
Let 
$$
\|C\|^2=\langle C, C\rangle\qquad\mbox{ for all $C\in\Mat_X(\CC)$,}
$$
and note that for any $R,S\in\Mat_X(\CC)$,
\begin{equation}
\label{oc}
\langle R,S\rangle=
\frac{1}{|X|}\sum\limits_{u\in X}(R\ol{S}^\top)_{uu}
=\frac{1}{|X|}\sum\limits_{u\in X}\sum\limits_{v\in X}(R)_{uv}(\ol{S})_{uv}
=\frac{1}{|X|}\sum\limits_{u,v\in X}(R\circ \ol{S})_{uv},
\end{equation}
(where $\circ$ is the elementwise-Hadamard product).

Now, let $B\in\Mat_{X}(\CC)$ denote a normal matrix with $d+1$ distinct eigenvalues. Let $\mathbb{C}_d[t]=\{a_0+a_1t+\ldots+a_dt^d \mid a_i\in\mathbb{C},\,0\le i\le d\}$ denote the ring of all polynomials of degree at most $d$ with coefficients in $\mathbb{C}$. For every $p,q\in\CC_d[t]$ we define
\begin{equation}
\label{oe}
\langle p, q\rangle={1\over |X|}\trace(p(B)\ol{q(B)}^\top),
\end{equation}
and let $\|p\|^2=\langle p,p\rangle$. We also have that \eqref{oe} is an inner product in $\mathbb{C}_d[t]$.

\begin{lemma}
\label{Qi}
Let $B\in\Mat_{X}(\RR)$ denote a normal real matrix with $d+1$ distinct eigenvalues $\{\lambda,\lambda_1,\ldots,\lambda_d\}\subseteq\CC$ (real or complex) and let $\mathbb{R}_d[t]=\{a_0+a_1t+\ldots+a_dt^d \mid a_i\in\mathbb{R},\,0\le i\le d\}$ denote the ring of all polynomials of degree at most $d$ with coefficients in $\mathbb{R}$. The minimal polynomial $m(t)$ of $B$ is of degree $d+1$ and $m(t)\in\RR_d[t]$.
\end{lemma}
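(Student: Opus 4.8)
The plan is to prove the two assertions separately: that $\deg m(t)=d+1$, and that $m(t)$ has real coefficients. For the degree, the cleanest route is to invoke the spectral decomposition already available from Proposition~\ref{ca}. Since $B$ is normal, part (i) of that proposition gives $p(B)=\sum_{i=0}^{d}p(\lambda_i)E_i$ for every $p\in\CC[t]$, and part (ii) tells us that $\{E_0,E_1,\ldots,E_d\}$ is an orthogonal, hence linearly independent, family. Consequently $p(B)=\OO$ holds if and only if $p(\lambda_i)=0$ for all $i$ with $0\le i\le d$. The monic polynomial of least degree vanishing at the $d+1$ distinct scalars $\lambda,\lambda_1,\ldots,\lambda_d$ is therefore
\[
m(t)=(t-\lambda)\prod_{i=1}^{d}(t-\lambda_i),
\]
which is exactly the minimal polynomial of $B$ and has degree $d+1$.

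It remains to show that this $m(t)$ has real coefficients. Here I would use that $B\in\Mat_X(\RR)$: if $Bv=\mu v$ with $v\ne\0$, then applying entrywise complex conjugation gives $B\ol{v}=\ol{\mu}\,\ol{v}$, so $\ol{\mu}$ is again an eigenvalue of $B$. Thus the set $\{\lambda,\lambda_1,\ldots,\lambda_d\}$ of distinct eigenvalues is invariant under complex conjugation, and the product $\prod_{\mu}(t-\mu)$ defining $m(t)$ is fixed by the map that conjugates coefficients. A polynomial equal to its own conjugate has real coefficients, so $m(t)\in\RR[t]$. (Equivalently, one may appeal to the standard fact that the minimal polynomial is insensitive to field extension: the monic generator of the ideal $\{p\in\RR[t]\mid p(B)=\OO\}$ coincides with that of $\{p\in\CC[t]\mid p(B)=\OO\}$, so the polynomial computed above over $\CC$ already lies in $\RR[t]$.)

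The content of the lemma is thus genuinely routine once Proposition~\ref{ca} is in hand; the only point that really needs normality is the step asserting that each eigenvalue contributes a single linear factor, which is precisely what guarantees degree exactly $d+1$ rather than something larger. The one place to be careful is the interpretation of the concluding membership: a polynomial of degree $d+1$ cannot belong to the space $\RR_d[t]$ of polynomials of degree at most $d$, so the intended reading is that $m(t)$ lies in $\RR[t]$ (it has real coefficients), with its degree being $d+1$; it is this reading that the argument above establishes.
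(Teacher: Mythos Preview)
Your argument is correct and follows essentially the same route as the paper: the paper invokes diagonalizability (via Theorem~\ref{gd}) to conclude $m(t)=\prod_i(t-\lambda_i)$ and then observes that, since $B$ has real entries, the characteristic polynomial has real coefficients and hence the complex eigenvalues occur in conjugate pairs, forcing $m(t)\in\RR[t]$. Your use of Proposition~\ref{ca} in place of the bare diagonalizability citation, and your direct conjugation of the eigenvector equation in place of the characteristic-polynomial argument, are minor cosmetic variations on the same idea; your remark about the intended reading of ``$m(t)\in\RR_d[t]$'' is also apt.
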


\begin{proof}
Since $B$ is diagonalizable (see Theorem~\ref{gd}), the minimal polynomial $m(t)$ of $B$ is
\begin{equation}
\label{Qh}
m(t)=(t-\lambda)(t-\lambda_1)\cdot\cdots\cdot(t-\lambda_d)
\end{equation}
(see, for example, \cite[Subchapter~7.11]{MCm}). As $B$ has real entries, its characteristic polynomial $c(t)=\det(B-tI)$ will only have real coefficients. The complex roots of $c(t)=0$ come in conjugate complex pairs, yielding $m(t)\in\RR_d[t]$ from \eqref{Qh}.
\end{proof}

\begin{lemma}
\label{Qp}
Let $B\in\Mat_{X}(\RR)$ denote a normal real matrix with $d+1$ distinct eigenvalues $\{\lambda,\lambda_1,\ldots,\lambda_d\}\subseteq\CC$ (real or complex), and let $\mathbb{R}_d[t]=\{a_0+a_1t+\ldots+a_dt^d \mid a_i\in\mathbb{R},\,0\le i\le d\}$ denote the ring of all polynomials of degree at most $d$ with coefficients in $\mathbb{R}$. For every $p,q\in\RR_d[t]$ we define an inner product $\langle p, q\rangle$ on $\RR_d[x]$ as in \eqref{oe}. If $\lambda\ne 0$, then there exists an orthogonal system of polynomials $\{q_0(t),q_1(t),\ldots,q_d(t)\}\subseteq\RR_d[t]$ such that every $q_i(t)$ $(0\le i\le d)$ has degree $i$ and $q_i(\lambda)\ne 0$ $(0\le i\le d)$ (i.e., $\lambda$ is not a root of $q_i(t)$ $(0\le i\le d)$).
\end{lemma}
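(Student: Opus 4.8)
The plan is to construct the $q_i$ by Gram--Schmidt orthogonalization of the monomial basis, and then to isolate the single nontrivial point, namely that none of the resulting polynomials vanishes at $\lambda$.

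First I would record the inner product in diagonal form. Writing $E_0,E_1,\ldots,E_d$ for the primitive idempotents of $B$ attached to $\lambda,\lambda_1,\ldots,\lambda_d$ and $m_i=\trace(E_i)$ for the multiplicities, Proposition~\ref{ca} gives $p(B)=p(\lambda)E_0+\sum_{i=1}^d p(\lambda_i)E_i$, together with $E_iE_j=\delta_{ij}E_i$ and $\overline{E_i}^\top=E_i$. Substituting into \eqref{oe} yields
\begin{equation*}
\langle p,q\rangle=\frac{1}{|X|}\Big(m_0\,p(\lambda)\overline{q(\lambda)}+\sum_{i=1}^d m_i\,p(\lambda_i)\overline{q(\lambda_i)}\Big).
\end{equation*}
For real $p,q$ this is real and symmetric, and $\langle p,p\rangle=\frac{1}{|X|}\big(m_0|p(\lambda)|^2+\sum_{i=1}^d m_i|p(\lambda_i)|^2\big)\ge 0$ vanishes only when $p$ is killed by all $d+1$ distinct eigenvalues; since $\deg p\le d$ while the minimal polynomial has degree $d+1$ (Lemma~\ref{Qi}), this forces $p=0$. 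Hence \eqref{oe} restricts to a genuine (positive definite, symmetric) real inner product on $\RR_d[t]$, and Gram--Schmidt is available there.

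Next I would apply Gram--Schmidt to $(1,t,t^2,\ldots,t^d)$ with respect to this inner product. Since the leading monomial $t^i$ is never cancelled, this produces real polynomials $q_0,q_1,\ldots,q_d$ with $\deg q_i=i$ that are pairwise orthogonal; in particular $q_k\perp\RR_{k-1}[t]$ for each $k$. This already delivers the orthogonal system of the prescribed degrees, and $q_0$ is a nonzero constant, so $q_0(\lambda)\ne 0$.

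The heart of the matter is $q_k(\lambda)\ne 0$ for $1\le k\le d$, which I would prove by contradiction. Suppose $q_k(\lambda)=0$. As $\lambda$ is real and $q_k$ has real coefficients, factor $q_k(t)=(t-\lambda)g(t)$ with $g\in\RR_{k-1}[t]$, $\deg g=k-1$. Then $g\in\RR_{k-1}[t]$ gives $\langle q_k,g\rangle=0$, and feeding $q_k(\lambda_i)=(\lambda_i-\lambda)g(\lambda_i)$ into the diagonal form (the $\lambda$-term drops out) yields
\begin{equation*}
0=\langle q_k,g\rangle=\frac{1}{|X|}\sum_{i=1}^d m_i(\lambda_i-\lambda)\,|g(\lambda_i)|^2.
\end{equation*}
Taking real parts, and using that $\lambda$ is the Perron--Frobenius eigenvalue of $B$ (Theorem~\ref{gC}), so that $|\lambda_i|\le\lambda$ and $\lambda_i\ne\lambda$ force $\mathrm{Re}(\lambda_i)<\lambda$ for every $i\ge 1$, each summand $m_i(\mathrm{Re}(\lambda_i)-\lambda)|g(\lambda_i)|^2$ is $\le 0$; as the total is $0$, every summand vanishes, whence $g(\lambda_i)=0$ for all $i=1,\ldots,d$. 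Thus the nonzero polynomial $g$ of degree $k-1\le d-1$ would have $d$ distinct roots, a contradiction. Therefore $q_k(\lambda)\ne 0$, completing the proof.

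The only genuinely delicate step is this last one, and it is exactly where the hypothesis on $\lambda$ must be used: the argument needs $\mathrm{Re}(\lambda_i)<\lambda$ for the remaining eigenvalues, i.e. that $\lambda$ is the real, maximal-modulus (Perron) eigenvalue, and not merely some nonzero eigenvalue. I expect this extremality/positivity input to be the main obstacle; by contrast, the Gram--Schmidt construction and the reduction to the weighted sum are routine once the diagonal form of the inner product is established.
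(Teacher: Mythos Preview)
Your approach differs substantially from the paper's, and---as you yourself flag at the end---it does not prove the lemma under the stated hypotheses. The lemma only assumes $B$ is a normal real matrix with $\lambda\ne 0$; there is no nonnegativity or irreducibility, so Theorem~\ref{gC} is unavailable, and $\lambda$ need be neither real nor maximal in modulus. Worse, the conclusion you want for the \emph{standard} Gram--Schmidt output is simply false in this generality: take a $3\times 3$ real symmetric $B$ with simple eigenvalues $-1,1,3$ and set $\lambda=1$. Then $q_1(t)=t-\frac{1}{3}(-1+1+3)=t-1$, so $q_1(\lambda)=0$. Thus no analysis of the fixed Gram--Schmidt polynomials can close the gap; the orthogonal system has to be \emph{chosen} to avoid $\lambda$.

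That is exactly what the paper does. It runs Gram--Schmidt inductively on $\{1,t,\ldots,t^d\}$, but whenever the usual step produces a polynomial $r_j$ with $r_j(\lambda)=0$, it perturbs the construction by doubling the leading term before subtracting the projections. The point is that $r_j(\lambda)=0$ forces $\lambda^j$ to equal the projected part evaluated at $\lambda$, and since $\lambda\ne 0$ gives $\lambda^j\ne 0$, doubling the leading contribution makes the value at $\lambda$ nonzero. This uses only the hypothesis $\lambda\ne 0$, with no Perron--Frobenius input. (As literally written, the paper's modified $q_j$ is no longer orthogonal to the earlier $q_\ell$, since only the $s_j$ coefficient is doubled; the evident repair is to project $2t^j$ rather than $t^j$, i.e.\ double both the leading term and the projection coefficients, which preserves orthogonality while still yielding $q_j(\lambda)=\lambda^j\ne 0$.) Your argument, by contrast, does give a clean proof in the setting where the lemma is actually applied later (normal $\lambda$-doubly stochastic irreducible $B$ with $\lambda$ the Perron eigenvalue), and there it has the pleasant by-product that the \emph{unmodified} Gram--Schmidt system already avoids $\lambda$.
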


\begin{proof}
Our proof is by construction. Using the inner product \eqref{oe}, we apply the Gram--Schmidt orthogonalization algorithm to the set $\{s_0(t)=1,s_1(t)=t,\ldots,s_d(t)=t^d\}$, modifying it in such a way to meet our conditions.

Note that, since $B\in\Mat_X(\RR)$ and $p,q\in\RR_d[t]$, by \eqref{oe} we have $\langle p, q\rangle\in\RR$. To construct the $q_i$'s, we use mathematical induction on $i$.

{\sc Base of induction.} Since $s_0(t)=1$ is a constant function, $\lambda$ is not a root of $s_0(t)$. So, we can define $q_0(t):=s_0(t)$. Next, define $r_1(t)$ in the following way
\begin{align*}
r_1(t) &:= s_1(t) - \sum_{\ell=0}^{0} \frac{\langle q_\ell,s_1\rangle}{\|q_\ell\|^2} q_\ell(t)\\
&=s_1(t) - \frac{\langle q_0,s_1\rangle}{\|q_0\|^2} q_0(t).
\end{align*}
For the polynomial $r_1(t)$ two cases are possible: either $r_1(\lambda)\ne 0$ or $r_1(\lambda)=0$.
In the first case (i.e., $r_1(\lambda)\ne 0$), we let $q_1(t):=r_1(t)$. 
In the second case (i.e., $r_1(\lambda)=0$), we have $\lambda=s_1(\lambda)= \frac{\langle q_0,s_1\rangle}{\|q_0\|^2} q_0(\lambda)$, which yields, for example, $2\lambda\ne  \frac{\langle q_0,s_1\rangle}{\|q_0\|^2} q_0(\lambda)$. Thus, we define $q_1(t)$ in the following way
$$
q_1(t) := 2s_1(t) - \frac{\langle q_0,s_1\rangle}{\|q_0\|^2} q_0(t).
$$
In both cases, $q_1(\lambda)\ne 0$ and $q_1(t)$ is a polynomial of degree $1$.

{\sc Induction step.} Assume that we found an orthogonal set of polynomials\break $\{q_0(t),q_1(t),\ldots,q_{j-1}(t)\}\subseteq\RR_d[t]$ such that $q_i(\lambda)\ne 0$ ($0\le i\le j-1$, $j\ge 2$), i.e., such that $\lambda$ is not a root of $q_i(t)$ (and assume that each $q_i$ has degree $i$). Now, we define $r_j(t)$ in the following way
\begin{align*}
r_j(t) &:= s_j(t) - \sum_{\ell=0}^{j-1} \frac{\langle q_\ell,s_j\rangle}{\|q_\ell\|^2} q_\ell(t)
\end{align*}
For the polynomial $r_j(t)$ two cases are possible: either $r_j(\lambda)\ne 0$ or $r_j(\lambda)=0$. 
In the first case (i.e., $r_j(\lambda)\ne 0$), we let $q_j(t):=r_j(t)$. 
In the second case (i.e., $r_j(\lambda)=0$), we have
$$
\lambda^j=s_j(\lambda)= \sum_{\ell=0}^{j-1} \frac{\langle q_\ell,s_j\rangle}{\|q_\ell\|^2} q_\ell(\lambda),
$$ which yields, for example, $2\lambda^j\ne  \sum_{\ell=0}^{j-1} \frac{\langle q_\ell,s_j\rangle}{\|q_\ell\|^2} q_\ell(\lambda)$. Thus, we define $q_j(t)$ in the following way
$$
q_j(t) := 2s_j(t) - \sum_{\ell=0}^{j-1} \frac{\langle q_\ell,s_j\rangle}{\|q_\ell\|^2} q_\ell(t)
$$
In both cases, $q_j(\lambda)\ne 0$ and $q_j(t)$ is a polynomial of degree $j$ (by construction).
\end{proof}

\begin{lemma}
\label{gu}
Let $B\in\Mat_{X}(\RR)$ denote a normal irreducible matrix with $d+1$ distinct eigenvalues $\{\lambda,\lambda_1,\ldots,\lambda_d\}$, such that $B\jj=\lambda\jj$ $(\lambda\ne 0)$. For every $p,q\in\RR_d[t]$ we define an inner product $\langle p, q\rangle$ on $\RR_d[x]$ as in \eqref{oe}, and let $\|p\|^2=\langle p,p\rangle$. Then, there exists a set of orthogonal polynomials with respect to \eqref{oe}, such that $\deg(p_i)=i$ $(0\le i\le d)$ and they are normalized in such a way that $\|p_i\|^2=p_i(\lambda)\in\RR^+$ $(0\le i\le d)$.
\end{lemma}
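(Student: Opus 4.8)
The plan is to obtain the required polynomials $\{p_i\}_{i=0}^d$ by rescaling the orthogonal system furnished by Lemma~\ref{Qp}. Since $\lambda\ne 0$, that lemma provides an orthogonal system $\{q_0(t),q_1(t),\ldots,q_d(t)\}\subseteq\RR_d[t]$ with respect to \eqref{oe} such that $\deg(q_i)=i$ and $q_i(\lambda)\ne 0$ for $0\le i\le d$. I would then set $p_i(t):=c_i\,q_i(t)$ for a scalar $c_i\in\RR$ and choose $c_i$ so that the prescribed normalization $\|p_i\|^2=p_i(\lambda)$ holds.

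Before solving for $c_i$, I would record two facts ensuring it is a well-defined nonzero real number. First, since \eqref{oe} is an inner product and $q_i\ne 0$ (it has degree $i$), we have $\|q_i\|^2>0$. Second, because $B$ is real and $B\jj=\lambda\jj$ with $\jj$ real, the eigenvalue $\lambda$ is real; hence $q_i(\lambda)\in\RR$ for the real polynomial $q_i$, and together with $q_i(\lambda)\ne 0$ this yields $q_i(\lambda)^2>0$.

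Writing $p_i=c_iq_i$ with $c_i\ne 0$, the requirement $\|p_i\|^2=p_i(\lambda)$ reads $c_i^2\|q_i\|^2=c_iq_i(\lambda)$, which forces
$$
c_i=\frac{q_i(\lambda)}{\|q_i\|^2}\qquad(0\le i\le d).
$$
This is a nonzero real scalar, so $\deg(p_i)=\deg(q_i)=i$, and rescaling by a nonzero scalar preserves pairwise orthogonality, so $\{p_0,\ldots,p_d\}$ remains an orthogonal system. Finally I would verify the normalization directly: $p_i(\lambda)=c_iq_i(\lambda)=q_i(\lambda)^2/\|q_i\|^2>0$ by the two facts above, while $\|p_i\|^2=c_i^2\|q_i\|^2=q_i(\lambda)^2/\|q_i\|^2=p_i(\lambda)$, giving exactly $\|p_i\|^2=p_i(\lambda)\in\RR^+$ for each $i$.

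As essentially all of the construction is carried out in Lemma~\ref{Qp}, there is no serious obstacle here; the only step requiring care is confirming that the scaling constant $c_i$ is a genuine nonzero real number, which is precisely why I would check the positive-definiteness of \eqref{oe} (so that $\|q_i\|^2>0$) and the reality and nonvanishing of $q_i(\lambda)$ at the outset.
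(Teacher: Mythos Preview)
Your proof is correct and takes essentially the same approach as the paper: both start from the orthogonal system $\{q_i\}$ of Lemma~\ref{Qp} and rescale to achieve $\|p_i\|^2=p_i(\lambda)$. The paper performs this in two steps (first normalizing to $r_i=q_i/\|q_i\|$, then setting $p_i=r_i(\lambda)\,r_i$), whereas you combine these into the single scaling $c_i=q_i(\lambda)/\|q_i\|^2$, which yields the identical $p_i$.
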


\begin{proof}Our proof is by construction. By Lemma~\ref{Qp}, we always can find an orthogonal system of polynomials $\{q_0(t),q_1(t),\ldots,q_d(t)\}\subseteq\RR_d[t]$ such that $q_i(\lambda)\ne 0$ $(0\le i\le d)$ (i.e., that $\lambda$ is not a root of any $q_i(t)$) and that each $q_i$ is of degree $i$. Next, we first define polynomial $r_i(t)$ $(0\le i\le d)$ on the following way
\begin{align*}
r_i(t) &= \frac{1}{\|q_i\|} q_i(t)\qquad (0\le i\le d).
\end{align*}
Note that  $\{r_0(t),r_1(t),\ldots,r_d(t)\}\subseteq\RR_d[t]$ orthonormal system of polynomials, and that we have $\dgr r_i=i$, $\|r_i\|=1$ $(0\le i\le d)$, and by our choice of the $q_i$'s, we also have $r_i(\lambda)\ne 0$ $(0\le i\le d)$. For arbitrary nonzero real numbers $\alpha_0,\alpha_1,\ldots,\alpha_d$, the set $\{\alpha_0r_0,\alpha_1r_1,\ldots,\alpha_dr_r\}$ is again an orthogonal set. For any $r_i(t)\in\RR_d[t]$ $(0\le i\le d)$, define $c:=r_i(\lambda)$ and $p_i(t):=cr_i(t)$ (note that $c\ne 0$ by our construction). We have
$$
\|p_i\|^2=\langle cr_i,cr_i\rangle=c^2\underbrace{\|r_i\|}_{=1}=c\cdot c=cr_i(\lambda)=p_i(\lambda).
$$
Thus, $\|p_i\|^2=p_i(\lambda)$ $(0\le i\le d)$. Note that the set $\{p_0(t),p_1(t),\ldots,p_d(t)\}$ is an orthogonal system and $p_0(t)=1$.
\end{proof}

\begin{definition}
\label{of}{\rm
Let $B\in\Mat_{X}(\RR)$ denote a normal irreducible matrix with $d+1$ distinct eigenvalues $\{\lambda,\lambda_1,\ldots,\lambda_d\}$, such that $B\jj=\lambda\jj$. For every $p,q\in\RR_d[t]$ we define $\langle p, q\rangle$ as in \eqref{oe} (i.e., $\langle p, q\rangle={1\over |X|}\trace(p(B)\ol{q(B)}^\top)$), and let $\|p\|^2=\langle p,p\rangle$. With reference to Lemma~\ref{gu}, the set of so-called {\em predistance polynomials} $\{p_0,p_1,\ldots,p_d\}\subseteq\RR_d[t]$, is a set of orthogonal polynomials with respect to the inner product \eqref{oe} (defined on the vector space $\RR_d[t]$), such that $\deg(p_i)=i$ $(0\le i\le d)$ and they are normalized in such a way that $\|p_i\|^2=p_i(\lambda)$. Note that $p_i(\lambda)\in\RR^+$ $(0\le i\le d)$.
}\end{definition}

\begin{lemma}
\label{oi}
With reference to {\rm Definition~\ref{of}}, let $B\in\Mat_X(\RR)$ denote a normal $\lambda$-doubly stochastic irreducible matrix with $d+1$ distinct eigenvalues. If $\{p_0,p_1,\ldots,p_d\}$ denotes the set of the predistance polynomials, then
$$
\sum_{i=0}^d p_i(B) = J.
$$
\end{lemma}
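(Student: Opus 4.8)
The plan is to prove the stronger polynomial identity $\sum_{i=0}^d p_i = h$, where $h$ is the Hoffman polynomial of $B$; applying it at $B$ then gives $\sum_{i=0}^d p_i(B)=h(B)=J$ by Theorem~\ref{Qe}. By Theorem~\ref{Qe} (and the observation that for a normal $B$ the minimal polynomial factors as $(t-\lambda)\prod_{i=1}^d(t-\lambda_i)$) we have $h(t)=\frac{|X|}{q(\lambda)}q(t)$ with $q(t)=\prod_{i=1}^d(t-\lambda_i)$ and $q(\lambda)=\prod_{i=1}^d(\lambda-\lambda_i)=:\pi_0$, so $\deg h=d$ and $h(B)=J$. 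Since $B$ is real, its nonreal eigenvalues occur in conjugate pairs, so by Lemma~\ref{Qi} the polynomial $q$ has real coefficients and $\pi_0\in\RR$; hence $h\in\RR_d[t]$. The predistance polynomials $\{p_0,\ldots,p_d\}$ are orthogonal with $\deg p_i=i$ and $p_i(\lambda)\neq 0$, so they are linearly independent and thus form a basis of the $(d+1)$-dimensional space $\RR_d[t]$. I would therefore expand $h=\sum_{i=0}^d a_i p_i$ with $a_i=\langle h,p_i\rangle/\|p_i\|^2$, reducing the whole lemma to showing $a_i=1$ for every $i$.

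The key computation is $\langle h,p_i\rangle=p_i(\lambda)$. Writing $J=\jj\jj^\top$ and using $h(B)=J$,
\begin{align*}
\langle h,p_i\rangle=\frac{1}{|X|}\trace\bigl(J\,\ol{p_i(B)}^\top\bigr)=\frac{1}{|X|}\,\jj^\top\ol{p_i(B)}^\top\jj=\frac{1}{|X|}\bigl(\ol{p_i(B)}\,\jj\bigr)^\top\jj.
\end{align*}
Since $B\jj=\lambda\jj$ we have $p_i(B)\jj=p_i(\lambda)\jj$, and because $p_i(\lambda)=\|p_i\|^2\in\RR$ (Definition~\ref{of}) also $\ol{p_i(B)}\,\jj=p_i(\lambda)\jj$; hence $\langle h,p_i\rangle=\frac{1}{|X|}\,p_i(\lambda)\,\jj^\top\jj=p_i(\lambda)$. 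Combining this with the normalization $\|p_i\|^2=p_i(\lambda)$ and using $p_i(\lambda)\neq 0$ (guaranteed by the construction in Lemmas~\ref{Qp} and~\ref{gu}) gives $a_i=p_i(\lambda)/p_i(\lambda)=1$ for all $i$. Therefore $h=\sum_{i=0}^d p_i$, and evaluating at $B$ yields $\sum_{i=0}^d p_i(B)=J$.

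The step I expect to require the most care is the trace computation above: one must track the Hermitian adjoint carefully and verify that $\langle h,p_i\rangle$ is genuinely real and equals $p_i(\lambda)$ (not its conjugate). This goes through precisely because $\jj$ is real and $p_i(\lambda)$ is real by the chosen normalization, so conjugation acts trivially on the relevant scalars. Two supporting points I would state explicitly are that $h\in\RR_d[t]$ — so that the orthogonal expansion takes place inside the real inner-product space $\RR_d[t]$ on which the $p_i$ are defined — and that $p_i(\lambda)\neq 0$, which legitimizes both the normalization and the division defining $a_i$.
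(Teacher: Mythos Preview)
Your proof is correct and follows the same overall strategy as the paper: expand the Hoffman polynomial $h$ in the orthogonal basis $\{p_i\}$ and show each Fourier coefficient equals $1$, using $\|p_i\|^2=p_i(\lambda)$.

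The one genuine difference is in how you evaluate $\langle h,p_i\rangle$. The paper diagonalizes $B=U\Lambda\ol{U}^\top$ and computes
\[
\langle h,p_i\rangle=\frac{1}{|X|}\trace\bigl(h(\Lambda)\ol{p_i(\Lambda)}\bigr)=\frac{1}{|X|}\sum_{\mu\in\diag(\Lambda)}h(\mu)\ol{p_i(\mu)}=\frac{1}{|X|}h(\lambda)p_i(\lambda)=p_i(\lambda),
\]
using that $h$ vanishes at every eigenvalue except $\lambda$ and $h(\lambda)=|X|$. You instead exploit the rank-one factorization $J=\jj\jj^\top$ together with $B\jj=\lambda\jj$, which is more elementary since it avoids invoking the full spectral decomposition and only needs the single eigenvector $\jj$. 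Both routes land on $\langle h,p_i\rangle=p_i(\lambda)$; yours is a little cleaner, while the paper's makes more transparent that the argument ultimately rests on $h(\lambda_k)=0$ for $k\ge 1$. One cosmetic simplification: since $p_i\in\RR_d[t]$ and $B$ is real, $p_i(B)$ is already a real matrix, so $\ol{p_i(B)}=p_i(B)$ outright and you need not route the conjugation through the realness of $p_i(\lambda)$.
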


\begin{proof}
Our proof uses the same technique that implicitly can be found, for example, in \cite{CFfg,CTS,FGla,HoP}.

Let $h(t)$ denote the Hoffman polynomial, i.e., $h(B)=J$ (see Theorem~\ref{Qe}). If we denote by $\{\lambda,\lambda_1,\lambda_2,\ldots,\lambda_d\}$ the set of the distinct eigenvalues of $B$, then $h(\lambda)=|X|$ and $h(\mu)=0$ for $\mu\in\{\lambda_1,\lambda_2,...,\lambda_d\}$ (see Theorem~\ref{Qe}). Since $B$ is a normal matrix, there exists a unitary matrix $U$ such that $B=U\Lambda\ol{U}^\top$, where $\Lambda$ is diagonal matrix in which the diagonal entries are the eigenvalues of $B$. Let $\diag(\Lambda)$ denote the list of all diagonal entries of $\Lambda$. Then, we have
\begin{align*}
\langle h,p_j\rangle &= {1\over |X|}\trace(h(B)\ol{p_j(B)}^\top)\\
&={1\over |X|}\trace(Uh(\Lambda)\ol{p_j(\Lambda)}^\top \ol{U}^{\top})\\
&={1\over |X|}\trace(h(\Lambda)\ol{p_j(\Lambda)}^\top)\\
&={1\over |X|}\trace(h(\Lambda)\ol{p_j(\Lambda)})\\
&={1\over |X|}\sum_{\mu\in\diag(\Lambda)} h(\mu) \ol{p_j(\mu)}\\
&={1\over |X|}\cdot h(\lambda)\cdot{p_j(\lambda)}
\end{align*}
which yields
\begin{equation}
\label{oh}
\langle h,p_j\rangle=\|p_j\|^2 \qquad(0\le j\le d).
\end{equation}
On the other hand, the Fourier expansion of $h$ is
\begin{equation}
\label{og}
h=\frac{\langle h,p_0\rangle}{\|p_0\|^2}p_0+
\frac{\langle h,p_1\rangle}{\|p_1\|^2}p_1+
\cdots+
\frac{\langle h,p_d\rangle}{\|p_d\|^2}p_d.
\end{equation}
By \gzit{oh} and \gzit{og}, the result follows.
\end{proof}

\begin{proposition}
\label{ol}
With reference to {\rm Definition~\ref{of}}, let $B\in\Mat_X(\RR)$ denote a normal $\lambda$-doubly stochastic irreducible matrix with $d+1$ distinct eigenvalues. 
Let $\G=\G(A)$ denote the underlying digraph of $B$, with adjacency matrix $A$, diameter $D$, and let $A_D$ denote the distance-$D$ matrix of $\G$.
Assume that $d=D$, and let $\{p_0,p_1,\ldots,p_D\}$ denote the set of the predistance polynomials. If there exists a polynomial $q(t)\in\RR_{D}[t]$ such that $A_D=q(B)$, then 
$$
q(t)=p_D(t).
$$
\end{proposition}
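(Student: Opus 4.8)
The plan is to expand $A_D$ in the orthogonal basis $\{p_0(B),p_1(B),\ldots,p_D(B)\}$ of $\B$ and to show that every coefficient vanishes except the one attached to $p_D(B)$, which equals $1$. Once I have $q(B)=A_D=p_D(B)$, the conclusion $q=p_D$ is immediate: since $d=D$, the matrix $B$ has $D+1$ distinct eigenvalues, so by Lemma~\ref{Qi} its minimal polynomial has degree $D+1$; hence $q-p_D$ has degree at most $D$ but annihilates $B$ (equivalently vanishes at all $D+1$ eigenvalues), forcing $q-p_D=0$.

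The first step is to record the support fact that drives everything. Combining Lemma~\ref{hB} and Lemma~\ref{hC}, $(B^\ell)_{uv}\ne 0$ if and only if there is a walk of length $\ell$ from $u$ to $v$ in $\G$. In particular, if $\partial(u,v)=D$ then $(B^\ell)_{uv}=0$ for every $\ell<D$, because a walk of length $\ell<D$ would force $\partial(u,v)\le\ell<D$. Consequently, for each $i<D$ the matrix $p_i(B)$, being a linear combination of $I,B,\ldots,B^i$, is zero in every position $(u,v)$ with $\partial(u,v)=D$.

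The second step computes the relevant inner products via \eqref{oc}. Since $A_D$ is the real $01$-matrix supported exactly on the distance-$D$ positions, the support fact gives $\langle p_i(B),A_D\rangle=0$ for all $i<D$. Writing $J=\sum_{i=0}^{D}p_i(B)$ by Lemma~\ref{oi} and $J=\sum_{i=0}^{D}A_i$, and noting that $A_0,\ldots,A_D$ are mutually orthogonal for \eqref{ob} (disjoint supports), I obtain
\begin{align*}
\|A_D\|^2=\langle J,A_D\rangle=\langle p_D(B),A_D\rangle .
\end{align*}
Expanding $A_D$ in the orthogonal basis and discarding the vanishing terms then yields $A_D=c\,p_D(B)$ with $c=\langle A_D,p_D(B)\rangle/\|p_D(B)\|^2$, where $\|p_D(B)\|^2\ne 0$ because $p_D(B)\ne\OO$ (its degree is $D$, below that of the minimal polynomial).

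The final step pins down $c$. From $A_D=c\,p_D(B)$ I get $\|A_D\|^2=c^2\|p_D(B)\|^2$ and $\langle p_D(B),A_D\rangle=c\,\|p_D(B)\|^2$; together with $\langle p_D(B),A_D\rangle=\|A_D\|^2$ this gives $c^2=c$, so $c\in\{0,1\}$. Because $D$ is the diameter there is at least one pair of vertices at distance $D$, so $A_D\ne\OO$ and $c=1$; hence $A_D=p_D(B)=q(B)$ and $q=p_D$. I expect the main obstacle to be the support argument of the first two steps — correctly converting ``$\partial(u,v)=D$'' into the vanishing of $p_i(B)$ off the top class so that all lower Fourier coefficients drop out — after which the scalar identity $c^2=c$ is essentially forced by the two decompositions of $J$.
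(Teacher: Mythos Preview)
Your proof is correct and follows essentially the same route as the paper: use the support fact (Lemmas~\ref{hB}, \ref{hC}) to kill the Fourier coefficients $\langle A_D,p_i(B)\rangle$ for $i<D$, conclude $A_D=c\,p_D(B)$, and then argue $c=1$. The only cosmetic difference is in that last step: the paper computes $\|q\|^2=q(\lambda)$ from row sums and invokes the normalization $\|p_D\|^2=p_D(\lambda)$ directly, whereas you route the same information through Lemma~\ref{oi} and the identity $c^2=c$.
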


\begin{proof} By our assumption, $B\jj=\lambda\jj$. We first show that $\|q\|^2=q(\lambda)$. Note that $A_D\jj=q(B)\jj=q(\lambda)\jj$, and from \eqref{oc}
\begin{align*}
\|q\|^2 &= \frac{1}{|X|} \trace(q(B)\ol{q(B)}^\top)\\
&= \frac{1}{|X|} \sum_{x,y\in X} (A_D)_{xy} = \frac{1}{|X|} \sum_{x\in X} (A_D\jj)_{x}\\
&= \frac{1}{|X|} \cdot |X|\cdot q(\lambda)\\
&= q(\lambda).
\end{align*}
Recall that $\{p_0,p_1,\ldots,p_D\}$ is a set of orthogonal polynomials such that $\deg(p_i)=i$ and $\|p_i\|^2=p_i(\lambda)$ $(0\le i\le D)$. Next note that
\begin{align*}
\langle q,p_i\rangle &= \frac{1}{|X|} \trace(q(B)\ol{p_i(B)}^\top)\\
&=\frac{1}{|X|} \sum_{x,y\in X} \left( A_D\circ \ol{p_i(B)} \right)_{xy}\\
&=\left\{
\begin{array}{ll}
0,&\mbox{ if $0\le i\le D-1$},\\
\frac{1}{|X|} \ds\sum_{x,y\in X} \left( A_D\circ \ol{p_D(B)} \right)_{xy},&\mbox{ if $i=D$}
\quad
\end{array}
\right. (\mbox{by Lemmas~\ref{hB}, \ref{hC}}).
\end{align*}
This yields that the Fourier expansion of $q=\sum_{i=0}^D \frac{\langle q,p_i\rangle}{\|p_i\|^2} p_i$ is equal to
$$
q=\frac{\langle q,p_D\rangle}{\|p_D\|^2} p_D
\qquad\mbox{and}\qquad \langle q,p_D\rangle\ne 0
$$
which implies $p_D=c\cdot q$ where $c=\frac{\|p_D\|^2}{\langle q,p_D\rangle}$. To conclude, we show that $c=1$:
\begin{align*}
q(\lambda) &= \frac{\langle q,p_D\rangle}{\|p_D\|^2} \underbrace{p_D(\lambda)}_{=\|p_D\|^2}\\
&= \langle q,p_D\rangle
= \langle q,c q\rangle\\
&= \ol{c}\langle q,q\rangle
= \ol{c}\|q\|^2\\
&= \ol{c} q(\lambda).
\end{align*}
The result follows.
\end{proof}

\begin{lemma}
\label{Qj}
With reference to {\rm Definition~\ref{of}}, let $B\in\Mat_X(\RR)$ denote a normal $\lambda$-doubly stochastic irreducible matrix with $d+1$ distinct eigenvalues. 
Let $\G=\G(A)$ denote the underlying digraph of $B$ with diameter $D$, and let $A_D$ denote the distance-$D$ matrix of $\G$. Assume that $D\ge 3$. For any $x,y\in X$, if $\partial(x,y)\le D-2$ in $\G$, then
$$
(A_DB^\top)_{xy} = 0.
$$
\end{lemma}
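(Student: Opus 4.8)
The plan is to expand the product entrywise and then kill each summand with a single triangle-inequality estimate for the directed distance. Writing out the $(x,y)$-entry and using $(B^\top)_{wy}=(B)_{yw}$,
$$
(A_DB^\top)_{xy} = \sum_{w\in X} (A_D)_{xw}(B^\top)_{wy} = \sum_{w\in X} (A_D)_{xw}(B)_{yw},
$$
so it suffices to show that every term of this sum vanishes whenever $\partial(x,y)\le D-2$.

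Next I would fix $w\in X$ and assume, toward a contradiction, that the corresponding term is nonzero, i.e.\ $(A_D)_{xw}\ne 0$ and $(B)_{yw}\ne 0$. The first condition means $\partial(x,w)=D$, directly by the definition of the distance-$D$ matrix $A_D$. The second condition, by the definition of the underlying digraph of $B$ (equivalently Lemma~\ref{hC} with $\ell=1$), means there is an arc from $y$ to $w$ in $\G$; hence $\partial(y,w)\le 1$, with $\partial(y,w)=0$ if $w=y$ (a loop) and $\partial(y,w)=1$ otherwise.

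Then I would invoke the triangle inequality for directed distance: concatenating a shortest directed path from $x$ to $y$ with a shortest directed path from $y$ to $w$ produces a directed walk from $x$ to $w$, so $\partial(x,w)\le\partial(x,y)+\partial(y,w)$. Combining $\partial(x,y)\le D-2$ with $\partial(y,w)\le 1$ gives $\partial(x,w)\le D-1<D$, which contradicts $\partial(x,w)=D$. Hence no term can be nonzero, and the entire sum equals $0$, as required.

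This is really a one-line distance bound, so I do not expect any genuine obstacle. The only points demanding a little care are the transpose bookkeeping (that $(B^\top)_{wy}=(B)_{yw}$, so that the relevant condition is an arc \emph{out of} $y$) and confirming that a nonzero entry $(B)_{yw}$ forces $\partial(y,w)\le 1$ regardless of whether $w=y$; both are immediate from the definition of the underlying digraph. Note also that the hypothesis $D\ge 3$ only serves to make the statement non-vacuous (it guarantees $D-2\ge 1$), and is not otherwise needed in the estimate.
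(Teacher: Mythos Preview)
Your proof is correct and essentially identical to the paper's own argument: both expand $(A_DB^\top)_{xy}$ entrywise as $\sum_{w}(A_D)_{xw}(B)_{yw}$, observe that a nonzero summand forces $\partial(x,w)=D$ together with an arc $y\to w$, and then derive the contradiction $\partial(x,w)\le \partial(x,y)+1\le D-1$ via the directed triangle inequality. The only cosmetic difference is that the paper phrases the contradiction at the level of the whole sum while you argue term by term.
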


\begin{proof}
For any $x,y\in X$, we have
\begin{align*}
(A_DB^\top)_{xy} &= \sum_{z\in X} (A_D)_{xz}(B^\top)_{zy}\\
&= \sum_{z\in\G_D^\ra(x)} (B)_{yz}.
\end{align*}
Our proof is by a contradiction. Assume that there exists $x,y\in X$ such that $\partial(x,y)\le D-2$, and $(A_DB^\top)_{xy}\ne 0$. This yields $\sum_{z\in\G_D^\ra(x)} (B)_{yz}\ne 0$, i.e., there exists $z\in\G^\ra_D(x)$ such that $(B)_{yz}\ne 0$, or equivalently $(A)_{yz}=1$. Now consider the distance-$i$ partition $\{\G^{\ra}_i(x)\}_{i=0}^D$ of the vertex set $X$ (for our choice of $x\in X$). Since $\partial(x,y)\le D-2$ and $(A)_{yz}=1$, it follows that $\partial(x,z)\le D-1$, a contradiction with $z\in\G^\ra_D(x)$. The result follows.
\end{proof}


\section{Case when $\boldsymbol{A_D}$ is polynomial in $\boldsymbol{B}$}
\label{oa}

In this section, we prove Theorem~\ref{Qg}. For this purpose, we need Proposition~\ref{La} and Lemma~\ref{Ld}.


\begin{proposition}
\label{La}
With reference to {\rm Definition~\ref{of}}, let $B\in\Mat_X(\RR)$ denote a normal $\lambda$-doubly stochastic irreducible matrix with $d+1$ distinct eigenvalues. 
Let $\G=\G(A)$ denote the underlying digraph of $B$, with adjacency matrix $A$, diameter $D$, and let $\{A_0,A_1,\ldots,A_D\}$ denote the distance-$i$ matrices of $\G$.
Assume that $d=D$, and let $\{p_0,p_1,\ldots,p_D\}$ denote the set of predistance polynomials. 
If $A_D=p_D(B)$, $A_{D-1}=p_{D-1}(B),\ldots,A_{i+1}=p_{i+1}(B)$, and if there exists a polynomial $q(t)\in\RR_{i}[t]$ such that $A_i=q(B)$, then 
$$
q(t)=p_i(t).
$$
\end{proposition}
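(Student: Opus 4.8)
The plan is to reproduce, with the index $D$ replaced by $i$, the argument of Proposition~\ref{ol}, to which this statement reduces when $i=D$. Throughout, $A_i$ is a real $01$-matrix supported exactly on the pairs $(x,y)$ with $\partial(x,y)=i$, and since $B\jj=\lambda\jj$ we have $A_i\jj=q(B)\jj=q(\lambda)\jj$. First I would record the normalization $\|q\|^2=q(\lambda)$: using \eqref{oc} together with $A_i\circ\ol{A_i}=A_i$ (valid because $A_i$ is a real $01$-matrix), one gets $\|q\|^2=\frac{1}{|X|}\sum_{x,y\in X}(A_i)_{xy}=\frac{1}{|X|}\sum_{x\in X}(A_i\jj)_x=q(\lambda)$. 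Moreover $A_i\ne\OO$: since $i\le D$ and a shortest path realizing the diameter passes through a vertex at each distance $0,1,\ldots,D$ from its origin, there is a pair at distance $i$. Hence $q\ne 0$ and $q(\lambda)=\|q\|^2>0$.

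The key step is the orthogonality $\langle q,p_j\rangle=0$ for $0\le j\le i-1$. For a polynomial $p_j$ of degree $j$, each monomial $B^k$ with $0\le k\le j$ satisfies $(B^k)_{xy}\ne 0$ only if there is a walk of length $k$ from $x$ to $y$ (Lemmas~\ref{hB} and~\ref{hC}); hence $(p_j(B))_{xy}=0$ whenever $\partial(x,y)>j$. Consequently, if $\partial(x,y)=i>j$ then $(A_i\circ\ol{p_j(B)})_{xy}=(A_i)_{xy}\ol{(p_j(B))_{xy}}=0$, while off $\{\partial=i\}$ the factor $A_i$ already vanishes; so every term of $\frac{1}{|X|}\sum_{x,y\in X}(A_i\circ\ol{p_j(B)})_{xy}$ is zero, and by \eqref{oc} this sum is exactly $\langle q,p_j\rangle$. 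I do not expect to invoke the hypotheses $A_{i+1}=p_{i+1}(B),\ldots,A_D=p_D(B)$ in this particular argument; they serve to set up the downward induction feeding into Theorem~\ref{Qg}. This support-to-distance translation via Lemmas~\ref{hB} and~\ref{hC} is the only genuinely delicate point, and it is the same mechanism used in Proposition~\ref{ol}.

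Finally, since $\deg q\le i$ and $\{p_0,\ldots,p_i\}$ is an orthogonal basis of $\RR_i[t]$ (having degrees $0,\ldots,i$), the Fourier expansion of $q$ together with the vanishing just proved collapses to $q=\frac{\langle q,p_i\rangle}{\|p_i\|^2}p_i$, with $\langle q,p_i\rangle\ne 0$ (otherwise $q=0$, contradicting $q(B)=A_i\ne\OO$). Writing $p_i=c\,q$ with $c=\|p_i\|^2/\langle q,p_i\rangle$, a nonzero real number since both $\|p_i\|^2$ and $\langle q,p_i\rangle$ are real, it remains to show $c=1$. Evaluating $q=\frac{\langle q,p_i\rangle}{\|p_i\|^2}p_i$ at $\lambda$ and using the normalization $p_i(\lambda)=\|p_i\|^2$ from Definition~\ref{of} gives $q(\lambda)=\langle q,p_i\rangle$; then $\langle q,p_i\rangle=\langle q,c\,q\rangle=\ol{c}\,\|q\|^2=\ol{c}\,q(\lambda)$, and since $q(\lambda)>0$ we conclude $\ol{c}=c=1$, that is $q(t)=p_i(t)$. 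Everything after the orthogonality step is the normalization bookkeeping already carried out in Proposition~\ref{ol}.
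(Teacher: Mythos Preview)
Your proof is correct and follows the same skeleton as the paper's argument: compute $\|q\|^2=q(\lambda)$, establish orthogonality of $q$ to the lower-degree $p_j$'s via the support/distance argument, collapse the Fourier expansion to a single term, and finish with the normalization computation showing $c=1$.

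There is one genuine point of departure worth highlighting. The paper expands $q$ in the full orthogonal basis $\{p_0,\ldots,p_D\}$ of $\RR_D[t]$, and therefore must also check $\langle q,p_j\rangle=0$ for $i+1\le j\le D$; this is where the hypotheses $A_{i+1}=p_{i+1}(B),\ldots,A_D=p_D(B)$ are invoked, since then $A_i\circ\ol{p_j(B)}=A_i\circ A_j=\OO$. You instead exploit $\deg q\le i$ to expand only in the basis $\{p_0,\ldots,p_i\}$ of $\RR_i[t]$, so those upper hypotheses are never touched. Your observation that they ``serve to set up the downward induction feeding into Theorem~\ref{Qg}'' is exactly right: as your argument shows, the proposition as stated does not actually require them. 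This is a modest but real sharpening of the paper's proof.
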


\begin{proof} The proof is similar to the proof of Proposition~\ref{ol}. By assumption $B\jj=\lambda\jj$. We first show that $\|q\|^2=q(\lambda)$. Note that $A_i\jj=q(B)\jj=q(\lambda)\jj$, and from \eqref{oc}
\begin{align*}
\|q\|^2 &= \frac{1}{|X|} \trace(q(B)\ol{q(B)}^\top)\\
&= \frac{1}{|X|} \sum_{x,y\in X} (A_i)_{xy}\\
&= \frac{1}{|X|} \cdot |X|\cdot q(\lambda)\\
&= q(\lambda).
\end{align*}
Recall that $\{p_0,p_1,\ldots,p_D\}$ is a set of orthogonal polynomials such that $\deg(p_j)=j$ and $\|p_j\|^2=p_j(\lambda)$ $(0\le j\le D)$. Next note that
\begin{align*}
\langle q,p_j\rangle &= \frac{1}{|X|} \trace(q(B)\ol{p_j(B)}^\top)\\
&=\frac{1}{|X|} \sum_{x,y\in X} \left( A_i\circ \ol{p_j(B)} \right)_{xy}\\
&=\frac{1}{|X|} \sum_{x,y\in X} \left( A_i\circ {p_j(B)} \right)_{xy}\\
&=\left\{
\begin{array}{ll}
0,&\mbox{ if $0\le j\le i-1$},\\
\ds\frac{1}{|X|} \ds\sum_{x,y\in X} \left( A_i\circ \ol{p_i(B)} \right)_{xy},&\mbox{ if $j=i$,}\\
0,&\mbox{ if $i+1\le j\le D$}\\
\end{array}
\right. (\mbox{by Lemmas~\ref{hB}, \ref{hC}}).
\end{align*}
This yields that the Fourier expansion of $q=\sum_{j=0}^D \frac{\langle q,p_j\rangle}{\|p_j\|^2} p_j$ is equal to
$$
q=\frac{\langle q,p_i\rangle}{\|p_i\|^2} p_i
\qquad\mbox{and}\qquad \langle q,p_i\rangle\ne 0
$$
which implies $p_i=c\cdot q$ where $c=\frac{\|p_i\|^2}{\langle q,p_i\rangle}$. To conclude, we show that $c=1$:
\begin{align*}
q(\lambda) &= \frac{\langle q,p_i\rangle}{\|p_i\|^2} \underbrace{p_i(\lambda)}_{=\|p_i\|^2}\\
&= \langle q,p_i\rangle
= \langle q,c q\rangle\\
&= \ol{c}\langle q,q\rangle
= \ol{c}\|q\|^2\\
&= \ol{c} q(\lambda).
\end{align*}
The result follows.
\end{proof}



\begin{lemma}
\label{Ld}
With reference to {\rm Definition~\ref{of}}, let $B\in\Mat_X(\RR)$ denote a normal $\lambda$-doubly stochastic irreducible matrix with $d+1$ distinct eigenvalues. 
Let $\G=\G(A)$ denote the underlying digraph of $B$ with diameter $D$, and let $\{A_0,A_1,\ldots,A_D\}$ denote the distance-$i$ matrices of $\G$. Assume that $D-j-1\ge 2$. For any $x,y\in X$, if $\partial(x,y)< D-j-1$ in $\G$ then
$$
(A_{D-j}B^\top)_{xy} = 0.
$$
\end{lemma}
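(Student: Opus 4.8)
The plan is to follow the contradiction argument already used in the proof of Lemma~\ref{Qj}, which is precisely the case $j=0$ of the present statement. First I would expand the matrix entry directly from the definitions. Writing $(A_{D-j}B^\top)_{xy}=\sum_{z\in X}(A_{D-j})_{xz}(B^\top)_{zy}$ and using that $(A_{D-j})_{xz}=1$ exactly when $\partial(x,z)=D-j$ (that is, when $z\in\G^{\ra}_{D-j}(x)$), together with $(B^\top)_{zy}=(B)_{yz}$, this collapses to
$$
(A_{D-j}B^\top)_{xy}=\sum_{z\in\G^{\ra}_{D-j}(x)}(B)_{yz}.
$$

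Next I would argue by contradiction. Suppose there are $x,y\in X$ with $\partial(x,y)<D-j-1$ yet $(A_{D-j}B^\top)_{xy}\ne 0$. By the displayed formula the sum over $\G^{\ra}_{D-j}(x)$ is nonzero, so some $z$ with $\partial(x,z)=D-j$ satisfies $(B)_{yz}\ne 0$; by the definition of the underlying digraph of $B$ this is equivalent to $(A)_{yz}=1$, i.e.\ there is an arc $y\ra z$ in $\G$. Now I would invoke the triangle-type inequality for the directed distance: since $\partial(x,y)<D-j-1$ and $\partial(x,y)$ is an integer, $\partial(x,y)\le D-j-2$, and the arc $y\ra z$ gives $\partial(y,z)\le 1$, whence $\partial(x,z)\le\partial(x,y)+\partial(y,z)\le D-j-1<D-j$. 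This contradicts $z\in\G^{\ra}_{D-j}(x)$, which forces $\partial(x,z)=D-j$, and the lemma follows.

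I do not expect any genuine obstacle here, since the argument is a verbatim adaptation of the $j=0$ case treated in Lemma~\ref{Qj}. The only point requiring care is the passage from the strict inequality $\partial(x,y)<D-j-1$ to $\partial(x,y)\le D-j-2$, which is exactly what makes the triangle bound $\partial(x,z)\le D-j-1$ \emph{strictly} smaller than $D-j$ and thus incompatible with $z$ lying at distance $D-j$ from $x$. The hypothesis $D-j-1\ge 2$ guarantees that $A_{D-j}$ is a genuine distance-$i$ matrix (these being defined for $i\ge 2$, with $A_0=I$ and $A_1=A$) and that the set of pairs $x,y$ under consideration is nonempty, so that the statement is not vacuous.
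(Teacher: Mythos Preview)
Your proof is correct and follows exactly the same route as the paper's: expand the entry as $\sum_{z\in\G^{\ra}_{D-j}(x)}(B)_{yz}$, then argue by contradiction using the triangle inequality for the directed distance. One small quibble with your closing commentary: the hypothesis $D-j-1\ge 2$ gives $D-j\ge 3$, not merely $D-j\ge 2$, and the distance-$i$ matrices $A_i$ are in any case defined for all $0\le i\le D$ (with $A_0=I$, $A_1=A$), so the remark about $A_{D-j}$ being ``genuine'' is not quite the role this hypothesis plays; but this does not affect the argument itself.
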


\begin{proof}
The proof is similar to the proof of Lemma~\ref{Qj}. For any $x,y\in X$, we have
\begin{align*}
(A_{D-j}B^\top)_{xy} &= \sum_{z\in X} (A_{D-j})_{xz}(B^\top)_{zy}\\
&= \sum_{z\in\G_{D-j}^\ra(x)} (B)_{yz}.
\end{align*}

Our proof is by a contradiction. Assume that there exists $x,y\in X$ such that $\partial(x,y)\le D-j-2$, and $(A_{D-j}B^\top)_{xy}\ne 0$. This yields $\sum_{z\in\G_{D-j}^\ra(x)} (B)_{yz}\ne 0$, i.e., there exists $z\in\G^\ra_{D-j}(x)$ such that $(B)_{yz}\ne 0$, or equivalently $(A)_{yz}=1$. Now consider the distance-$i$ partition $\{\G^{\ra}_i(x)\}_{i=0}^D$ of the vertex set $X$ (for our choice of $x\in X$). Since $\partial(x,y)\le D-j-2$ and $(A)_{yz}=1$, we have $\partial(x,z)\le D-j-1$, a contradiction with $z\in\G^\ra_{D-j}(x)$. The result follows.
\end{proof}


\begin{proposition}
\label{ok}
With reference to {\rm Definition~\ref{of}}, let $B\in\Mat_X(\RR)$ denote a normal $\lambda$-doubly stochastic irreducible matrix with $d+1$ distinct eigenvalues. 
Let $\G=\G(A)$ denote the underlying digraph of $B$, with adjacency matrix $A$, diameter $D$, and let $A_D$ denote the distance-$D$ matrix of $\G$.
Assume that $d=D$, and let $\{p_0,p_1,\ldots,p_D\}$ denote the set of predistance polynomials. 
If $A_D$ is a polynomial in $B$, then
$$
A_i=p_i(B)\qquad(0\le i\le D).
$$
\end{proposition}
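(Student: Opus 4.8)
The plan is to prove the statement by downward induction on $i$, from $i=D$ down to $i=0$. At each stage the goal is to show that $A_i=p_i(B)$; by the uniqueness backstop Proposition~\ref{La} it suffices to produce $A_i$ as a polynomial in $B$ of degree at most $i$, but the argument below in fact yields $A_i=p_i(B)$ directly. The base case $i=D$ is immediate: since $A_D$ is assumed to be a polynomial in $B$, Proposition~\ref{ol} gives $A_D=p_D(B)$. For the inductive step I would assume $A_{i+1}=p_{i+1}(B),\ldots,A_D=p_D(B)$ and use this to recover $A_i$. Throughout I would use the inner product \eqref{oc} and the fact that $\{p_0(B),\ldots,p_D(B)\}$ is an orthogonal basis of $\B$.

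The engine of the induction is the matrix $A_{i+1}B^\top$. Because $B$ is normal, $B^\top$ is itself a polynomial in $B$ (Proposition~\ref{ca}(iv)), so $A_{i+1}B^\top=p_{i+1}(B)\,B^\top\in\B$. By Lemma~\ref{Ld} (and Lemma~\ref{Qj} when $i=D-1$) this matrix is supported only on pairs $(x,y)$ with $\partial(x,y)\ge i$. Expanding $A_{i+1}B^\top$ in the basis $\{p_0(B),\ldots,p_D(B)\}$ and recalling that $p_j(B)$ is supported on pairs at distance at most $j$ (Lemmas~\ref{hB} and~\ref{hC}), every Fourier coefficient with $j<i$ must vanish by disjointness of supports. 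Hence $A_{i+1}B^\top=c_i\,p_i(B)+\sum_{j>i}c_j\,p_j(B)$, and substituting the induction hypothesis $p_j(B)=A_j$ for $j>i$ turns the tail into distance matrices supported strictly above level $i$. Restricting this identity to pairs with $\partial(x,y)<i$ — where the left side and the entire tail vanish — forces $c_i\,p_i(B)$ to vanish there too; and one checks $c_i\ne 0$ by exhibiting a pair at distance exactly $i$ where $A_{i+1}B^\top$ is positive (take the penultimate vertex $y$ on a shortest path from $x$ to a vertex $w$ with $\partial(x,w)=i+1$, so that $(A_{i+1}B^\top)_{xy}\ge (B)_{yw}>0$). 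Thus $p_i(B)$ is supported precisely on the distance-$i$ pairs. Finally, from $J=\sum_{k=0}^{D}p_k(B)$ (Lemma~\ref{oi}) together with the induction hypothesis one obtains the tail-sum identity $A_0+\cdots+A_i=\sum_{k=0}^{i}p_k(B)$; evaluating both sides at a distance-$i$ pair, where every $p_k(B)$ with $k<i$ vanishes for degree reasons, gives $p_i(B)=1$ there, whence $p_i(B)=A_i$.

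The delicate point — and the reason a naive "three-term recurrence" will not work — is that $\G$ is directed, so reversing an arc may drop the distance by more than one; consequently $A_{i+1}B^\top$ is \emph{not} localized near distance $i$ but can have support all the way up to $D$. What rescues the argument is the asymmetry of the available bounds: multiplication by $B^\top$ yields only the one-sided lower bound $\partial(x,y)\ge i$ of Lemma~\ref{Ld}, which is exactly what separates $A_i$ from the lower-distance layers, while the uncontrolled higher-distance contributions are absorbed by the hypothesis $p_j(B)=A_j$ for $j>i$. I expect the main obstacles to be making this support/degree bookkeeping precise and verifying $c_i\ne 0$, and treating the small cases $i\in\{0,1\}$ (where Lemma~\ref{Ld} no longer applies) separately: there $A_0=I=p_0(B)$ is trivial, and once $A_2,\ldots,A_D\in\B$ are known the tail-sum identity gives $A_1=(A_0+A_1)-I\in\B$, so $A_1=p_1(B)$ by Proposition~\ref{La}.
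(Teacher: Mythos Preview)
Your proposal is correct and follows essentially the same approach as the paper's proof: downward induction from $i=D$, using $A_{i+1}B^\top\in\B$ together with the one-sided support bound of Lemma~\ref{Ld}, the induction hypothesis to absorb the higher-distance terms, the Hoffman identity (Lemma~\ref{oi}) to pin down the value $1$ on distance-$i$ pairs, and the nonvanishing of the distance-$i$ coefficient. The only cosmetic difference is that you kill the low-degree Fourier coefficients directly via $\langle A_{i+1}B^\top,p_j(B)\rangle=0$ from disjoint supports, whereas the paper instead evaluates the partial sum $\sum_{h\le i}\beta_h p_h(B)$ entrywise and identifies it as $\beta_i A_i$ before invoking Proposition~\ref{La}; the content is the same.
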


\begin{proof}
For the moment, let $q_m(t)=\sum_{i=0}^m p_i(t)$ $(0\le m\le D)$, and note that $\deg(q_m)=m$ $(0\le m\le D)$. Since $A_D$ is a polynomial in $B$, by Proposition~\ref{ol}, $p_D(B)=A_D$.

We first prove that $A_{D-1}=p_{D-1}(B)$. If $D=2$, the result follows (because $p_0(t)=1$, by assumption $p_2(B)=A_2$, and $J=\sum_{i=0}^2 p_i(B)$ yields $p_1(B)=A$). Assume that $D\ge 3$. Pick $x,y\in X$ such that $\partial(x,y)=D-1$, and note that $(q_{D-2}(B))_{xy}=0$. By Lemma~\ref{oi}, we have
\begin{align}
1=(J)_{xy}&=(q_D(B))_{xy}\nonumber\\
&=\left(q_{D-2}(B)+p_{D-1}(B)+p_D(B)\right)_{xy}\nonumber\\
&=\underbrace{(q_{D-2}(B))_{xy}}_{=0}+(p_{D-1}(B))_{xy}+\underbrace{(A_D)_{xy}}_{=0}\nonumber\\
&=(p_{D-1}(B))_{xy}.\label{OX}
\end{align}
We can conclude that, 
\begin{equation}
\label{ox}
\mbox{if $\partial(x,y)=D-1$, then $(p_{D-1}(B))_{xy}=1$.}
\end{equation}
Using \eqref{ox}, we show that $A_{D-1}$ is a polynomial in $B$.

Since $A_D$ is a polynomial in $B$, by Proposition~\ref{ol}, $p_D(B)=A_D$. By Lemma~\ref{Qj}, for any $x,y\in X$,
\begin{equation}
\label{oo}
(A_DB^\top)_{xy} = 
\left\{
\begin{array}{ll}
0,&\mbox{if }\partial(x,y)\le D-2,\\
\sum_{z\in\G_D^\ra(x)} (B)_{yz},&\mbox{if }\partial(x,y)\in\{D-1,D\}
\end{array}
\right.\quad (x,y\in X).
\end{equation}
Let $\B$ denote the vector space of all polynomials in $B$. Since $\{p_i(B)\}_{i=0}^D$ is a basis of $\B$, and $B^\top\in\B$ (see Proposition~\ref{ca}(iv)), there exist complex scalars $\alpha_h$ $(0\le h\le D)$ such that
\begin{equation}
\label{Qm}
A_DB^{\top} = \sum_{h=0}^D \alpha_h p_h(B)=\sum_{h=0}^{D-1} \alpha_h p_h(B) + \alpha_D \underbrace{p_D(B)}_{A_D}.
\end{equation}
By \eqref{oo} and \eqref{Qm}, note that
\begin{equation}
\label{Lb}
\left( \sum_{h=0}^D \alpha_h p_h(B) \right)_{xy}=0
\qquad\mbox{if } \partial(x,y)\le D-2.
\end{equation}
Now, from \eqref{OX}, \eqref{ox} and \eqref{Lb}, we have
$$
\left( \sum_{h=0}^{D-1} \alpha_h p_h(B) + \alpha_D A_D \right)_{xy}
=
\left\{
\begin{array}{ll}
0,&\mbox{if }\partial(x,y)\le D-2,\\
\alpha_{D-1},&\mbox{if }\partial(x,y)=D-1\\
\alpha_{D},&\mbox{if }\partial(x,y)=D
\end{array}
\right.\quad (x,y\in X),
$$
or, in other words,
\begin{equation}
\label{Lc}
\sum_{h=0}^{D-1} \alpha_h p_h(B) = \alpha_{D-1} A_{D-1}.
\end{equation}
If $\alpha_{D-1}=0$ then by \eqref{Qm} and \eqref{Lc}, $A_DB^\top=\alpha_D A_D$, a contradiction (see \eqref{oo}). Thus \eqref{Lc} yields that $A_{D-1}$ is a polynomial in $B$. By Proposition~\ref{La}, $p_{D-1}(B)=A_{D-1}$.

If $D=3$, the result follows. Assume that $D\ge 4$, and that we executed $j\ge 1$ steps from above (where $D-j-1\ge 1$), i.e., 
\begin{align*}
A_D&=p_D(B),\qquad A_{D-1}=p_{D-1}(B),\qquad \ldots,\qquad A_{D-j}=p_{D-j}(B).
\end{align*}
We now show that $A_{D-j-1}=p_{D-j-1}(B)$. Pick $x,y\in X$ such that $\partial(x,y)=D-j-1$, and note that $(q_{D-j-2}(B))_{xy}=0$ (by Lemma~\ref{hB}). By Lemma~\ref{oi},
\begin{align*}
1=(J)_{xy}&=(q_D(B))_{xy}\\
&=\left(q_{D-j-2}(B)+p_{D-j-1}(B)+\sum_{i=D-j}^D p_i(B)\right)_{xy}\\
&=\underbrace{(q_{D-j-2}(B))_{xy}}_{=0}+(p_{D-j-1}(B))_{xy}+\underbrace{\left(\sum_{i=D-j}^D p_i(B)\right)_{xy}}_{(A_{D-j}+\cdots+A_D)_{xy}=0}\\
&=(p_{D-j-1}(B))_{xy}.
\end{align*}
We can conclude that, 
\begin{equation}
\label{ov}
\mbox{if $\partial(x,y)=D-j-1$, then $(p_{D-j-1}(B))_{xy}=1$.}
\end{equation}
Using \eqref{ov}, we prove that $A_{D-j-1}$ is a polynomial in $B$.

Consider the product $A_{D-j}B^\top$. For arbitrary $x,y\in X$, by Lemma~\ref{Ld}, we have
\begin{equation}
\label{op}
(A_{D-j}B^\top)_{xy} 
= 0, \qquad\mbox{if }\partial(x,y)<D-j-1.
\end{equation}
Since $\{p_i(B)\}_{i=0}^D$ is a linearly independent set and $B^\top\in \B$, there exist complex scalars $\beta_h$ $(0\le h\le D)$ such that
\begin{equation}
\label{Le}
A_{D-j}B^{\top} = \sum_{h=0}^D \beta_h p_h(B) 
=\sum_{h=0}^{D-j-1} \beta_h p_h(B) + \beta_{D-j} A_{D-j} + \cdots + \beta_D A_D.
\end{equation}
By \eqref{ov}, \eqref{op} and \eqref{Le} we have
$$
\sum_{h=0}^{D-j-1} \beta_h p_h(B) = \beta_{D-j-1} A_{D-j-1}.
$$
Next we prove that $\beta_{D-j-1}\ne 0$. The proof is by contradiction. If $\beta_{D-j-1}= 0$, then \eqref{Le} becomes
$
A_{D-j}B^{\top} = \beta_{D-j} A_{D-j} + \beta_{D-j+1} A_{D-j+1} + \cdots +  \beta_D A_D.
$
This yields that $(A_{D-j}B^{\top})_{xy}=0$ for all $x\in X$ and $y\in\G^{\ra}_{D-j-1}(x)$, a contradiction (note that $(A_{D-j}B^{\top})_{xy}=\sum_{z_\in\G_{D-j}^{\ra}(x)} (B)_{yz}$). Thus, $A_{D-j-1}$ is a polynomial in $B$. By Proposition~\ref{La}, the result follows.
\end{proof}

In some sense, our Theorem~\ref{Qg} is similar to the following result from the theory of distance-regular graphs.

\begin{proposition}[{\cite[Proposition~2]{Fsp} or \cite{FGJ}}]
An undirected regular graph $\G=\G(A)$ with diameter $D$ and $d + 1$ distinct eigenvalues is distance-regular if and only if $D = d$ and the distance-$D$ matrix $A_D$ is a polynomial in $A$.
\end{proposition}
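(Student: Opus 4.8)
The plan is to recognize this proposition as the symmetric specialization of the theory already built, namely the instance $B=A$ of Proposition~\ref{ok}. First I would record that for an undirected regular graph $\G=\G(A)$ of valency $k$ the adjacency matrix $A$ is symmetric (hence normal), nonnegative, irreducible (as $\G$ is connected, by Lemma~\ref{gb}), and $k$-doubly stochastic (as $\G$ is regular, so $A\jj=A^\top\jj=k\jj$). Since $A$ is already a $01$-matrix, the underlying digraph of $A$ is $\G$ itself, with the same diameter $D$ and the same distance matrices $A_0=I,A_1=A,\ldots,A_D$. Thus, whenever the eigenvalue/diameter condition $d=D$ holds, $A$ meets every hypothesis placed on $B$ in Proposition~\ref{ok} (with $\lambda=k$). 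I would also invoke the standard characterization that $\G$ is distance-regular exactly when each distance matrix admits a representation $A_i=p_i(A)$ with $\deg p_i=i$ $(0\le i\le D)$.

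For the forward implication, assume $\G$ is distance-regular. Then $A_i=p_i(A)$ with $\deg p_i=i$, so in particular $A_D$ is a polynomial in $A$, and it only remains to show $d=D$. For this I would set $\M=\Span\{A_0,A_1,\ldots,A_D\}$, the Bose--Mesner algebra of $\G$. Since $\M$ is closed under matrix multiplication and contains $A=A_1$, every polynomial in $A$ lies in $\M$, giving $\B=\{p(A)\mid p\in\CC[t]\}\subseteq\M$; conversely each $A_i=p_i(A)\in\B$ yields $\M\subseteq\B$, so $\B=\M$. The matrices $A_0,\ldots,A_D$ have pairwise disjoint supports (each pair $(x,y)$ has a single distance), hence they are linearly independent and $\dim\M=D+1$. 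Combining this with $\dim\B=d+1$ from Proposition~\ref{ca}(iii) forces $d=D$.

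For the reverse implication, assume $D=d$ and that $A_D$ is a polynomial in $A$. By the opening observation, $A$ now satisfies all hypotheses of Proposition~\ref{ok} (taking $B=A$), so that proposition delivers $A_i=p_i(A)$ for every $0\le i\le D$, where the $p_i$ are the predistance polynomials, with $\deg p_i=i$. By the same characterization used above, this is precisely distance-regularity, which completes the equivalence.

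The hard part has in effect already been discharged: the substantive direction is Proposition~\ref{ok}, whose inductive descent from $A_D$ down to $A_0$ (built on Lemma~\ref{oi}, Lemma~\ref{Ld}, and Proposition~\ref{La}) is exactly what forces every distance matrix to be a polynomial in $A$. The only genuine care needed here is the bookkeeping that identifies the symmetric graph setting with the $B=A$ instance of the general theory, together with the short dimension count establishing $d=D$ in the forward direction; both become routine once the identification $B=A$ is made explicit.
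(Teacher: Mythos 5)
Your argument is correct, but note that the paper itself never proves this proposition: it is quoted from \cite{Fsp} and \cite{FGJ} purely as a point of comparison for Theorem~\ref{Qg}. What you have done is re-derive the cited result internally, as the specialization $B=A$, $\lambda=k$ of Proposition~\ref{ok}, and the bookkeeping you perform is exactly what makes this legitimate: $A$ symmetric (hence normal), irreducible by connectedness via Lemma~\ref{gb}, $k$-doubly stochastic by regularity, and the underlying digraph of $A$ equal to $\G$ itself with the same distance matrices, so the hypotheses of Proposition~\ref{ok} are met once $d=D$. Your forward direction (the dimension count $\dim\B=d+1$ from Proposition~\ref{ca}(iii) against $\dim\M=D+1$ from the disjoint supports of the $A_i$) is sound and standard. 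The one place where you lean on something external to the paper is the characterization ``$\G$ is distance-regular iff $A_i=p_i(A)$ with $\deg p_i=i$ for all $i$''; this is a classical fact, but if you want to stay entirely inside the paper's toolkit you could instead observe that once all $A_i\in\B$, the set $\{A_i\}_{i=0}^D$ is a basis of $\B$ closed under multiplication, so properties (AS1)--(AS5) hold and the ordering $(A_0,A_1,\ldots,A_D)$ with $A_i=p_i(A)$, $\deg p_i=i$, is a $P$-polynomial ordering, which the paper's Section~2.6 already notes yields distance-regularity. Compared with the original proofs in \cite{Fsp,FGJ}, which work directly with symmetric matrices and predistance-polynomial/Hoffman-polynomial arguments, your route buys economy by reusing the paper's more general nonsymmetric machinery (Lemmas~\ref{oi} and~\ref{Ld}, Propositions~\ref{La} and~\ref{ok}), at the cost of invoking that general apparatus for a case where the classical symmetric argument is shorter and self-contained.
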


\subsection{Proof of Theorem~\ref{Qg}}
\label{QG}

In this subsection we prove Theorem~\ref{Qg}.


\medskip
$(\La)$ Assume that $B$ is a normal $\lambda$-doubly stochastic matrix with $D+1$ distinct eigenvalues and that $A_D$ is a polynomial in $B$. We use this assumption to show that $\B$ is the Bose--Mesner algebra of a commutative $D$-class association scheme.

Since $B$ is a normal matrix with $D+1$ distinct eigenvalues, by Proposition~\ref{ca}, $\{I, B,B^2,\ldots,\break B^D\}$ is a basis of $\B$. Since $A_D\in\B$, from Proposition~\ref{ok} it follows that the distance-$i$ matrices $A_i$ $(0\le i\le D)$ belong to $\B$. Furthermore since $\{A_i\}_{i=0}^D$ is a linearly independent set of matrices, it also forms a basis of $\B$. As $B$ is a normal matrix, note that $\ol{B}^\top\in\B$ by Proposition~\ref{ca}(iv). Now it is routine to check that the distance-$i$ matrices satisfy all properties (AS1)--(AS5) of a commutative association scheme.

\bigskip
$(\Ra)$ Assume that $\B$ is the Bose--Mesner algebra of a commutative $D$-class association scheme. We use this assumption to show that $B$ is a normal $\lambda$-doubly stochastic matrix (for some $\lambda$) with $D+1$ distinct eigenvalues and that the distance-$D$ matrix $A_D$ of $\G=\G(A)$ is a polynomial in $B$, where $A$ is the adjacency matrix of the underlying digraph of $B$.

The fact that $B$ belongs to a commutative association scheme implies that $B$ is a normal matrix. Since $B$ generates a commutative association scheme and $J$ belongs to the algebra of this scheme, by Theorem~\ref{Qe}, $B$ is a (normal) $\lambda$-doubly stochastic matrix (for some $\lambda$). For a moment, assume that $B$ has $d+1$ distinct eigenvalues; then, by Proposition~\ref{ca}, $\{I,B,B^2,\ldots,B^d\}$ is a basis of $\B$. This is possible only if $d=D$, and thus $B$ has $D+1$ distinct eigenvalues (see also, for example, \cite[Corollary~3.5]{MPc}).

It is left to show that the distance-$D$ matrix $A_D$ is polynomial in $B$. Let $\{B_0,B_1,\ldots,B_D\}$ denote the standard basis of $\B$ (the basis of $\circ$-idempotent $01$-matrices), and let $\theta_i$'s denote the nonzero scalars such that 
$$
B=\sum_{i\in\Phi} \theta_i B_i.
$$
for some nonempty set of indices $\Phi$. Note that, by definition, $A=\sum_{i\in\Phi} B_i$ is the adjacency matrix of the underlying digraph of $B$. Since $B$ is an irreducible matrix, the matrix $A$ is irreducible too (Lemma~\ref{gr}). Then, by Lemma~\ref{gb}, $\G$ is a strongly connected digraph, it is also a regular graph since $A\in\B$.

\medskip
We consider $\G=\G(A)$ and we finish the proof by proving Claims~1 and 2 below.

\medskip
\noindent
{\sc Claim 1.} For any $i$ $(0\le i\le d)$ and $y,z,u,v\in X$, if $(B_i)_{zy}=(B_i)_{uv}=1$, then $\partial(z,y)=\partial(u,v)$ in $\G$. 

\smallskip
{\bf Proof of Claim 1.} For every $\ell\in\NN$, there exist complex scalars $\alpha^{(\ell)}_i$ $(0\le i\le d)$ such that $A^\ell=\sum_{i=0}^{d} \alpha^{(\ell)}_i B_i$. Recall that $\sum_{i=0}^d B_i = J$ and $B_i\circ B_j=\delta_{ij}B_i$ $(0\le i,j\le d)$. This yields that, for any $y,z,u,v\in X$ and $i$ $(0\le i\le d)$, if $(B_i)_{zy}\ne 0$ and $(B_i)_{uv}\ne 0$, then $(A^\ell)_{zy}=(A^\ell)_{uv}=\alpha^{(\ell)}_i$, i.e., the number of walks of length $\ell$ from $z$ to $y$ is equal to the the number of walks of length $\ell$ from $u$ to $v$ (see Lemma~\ref{hB}). Moreover, $(A^\ell)_{zy}=(A^\ell)_{uv}$ holds for any $\ell$ $(\ell\in\NN)$. We proceed by a contradiction, in the same spirit as in \cite[Lemma~2.3]{FQpG}, where the author has an undirected graph. Assume that $\partial(z,y)>\partial(u,v)=m$. Then, $(A^m)_{uv}\ne 0$ and $(A^m)_{zy}=0$, a contradiction. The claim~follows. 

\bigskip
Next we show that Claim~2 holds.

\medskip
\noindent
{\sc Claim 2.} Every distance-$i$ matrix $A_i$ of $\G=\G(A)$ belongs to $\B$, i.e., $A_i\in\B$ $(0\le i\le D)$.

\smallskip
{\bf Proof of Claim 2.} From the proof of Claim~1 it follows that, if $y,z\in X$ are two arbitrary vertices such that $\partial(z,y)=i$, then there exists $B_j$ (for some $0\le j\le d$) such that $(B_j)_{zy}=1$. Recall also that $(A_i)_{zy}=1$. In fact, for such an index $j$ and any nonzero $(u,v)$-entry of $B_j$, we have $\partial(u,v)=i$. This yields
$$
A_i=\sum_{j:A_i\circ B_j\ne{\boldsymbol{O}}} B_j
\qquad(0\le i\le D).
$$
The result follows.




\section*{Acknowledgments}

This work is supported in part by the Slovenian Research Agency (research program P1-0285 and research projects J1-3001 and N1-0353).

\section*{Declaration of competing interest}

The authors declare that they have no known competing financial interests or personal relationships that could have appeared to influence the work reported in this paper.


{\small
\bibliographystyle{references}
\bibliography{stochastic}
}

\end{document}